\newtheorem{theorem}{Theorem}[section]
\newtheorem{corollary}[theorem]{Corollary}
\newtheorem{lemma}[theorem]{Lemma}
\newtheorem{proposition}[theorem]{Proposition}
\theoremstyle{definition}
\newtheorem{definition}[theorem]{Definition}
\newtheorem{example}[theorem]{Example}
\numberwithin{equation}{section}
\newcommand{\RR}{{\mathbb R}}
\newcommand{\NN}{{\mathbb N}}
\newcommand{\WEff}{{\mbox{\rm WEff\,}}}
\newcommand{\Eff}{{\mbox{\rm Eff\,}}}
\newcommand{\rint}{{\mbox{\rm int\,}}}
\newcommand{\diam}{{\rm diam\,}}
\providecommand{\norm}[1]{\left\lVert#1 \right\rVert}
\title[Genericity of well-posed vector optimization problems]{Genericity of well-posed vector optimization problems }
\author[Matteo Rocca]{Matteo Rocca}
\address[M. Rocca]{Department of Economics, Universit\'a degli Studi dell'Insubria, Via Monte Generoso 71, 21100 Varese, Italy}
\email{{\tt matteo.rocca@uninsubria.it}}
\keywords{vector optimization, well-posedness, scalarization}
\subjclass[2010]{90C25, 90C26, 90C29}
\begin{document}

\begin{abstract}
In this paper we consider well-posedness properties of vector optimization problems with objective function $f: X \to Y$ where  $X$ and $Y$ are Banach spaces and  $Y$ is partially ordered by a closed convex pointed cone with nonempty interior.  The vector well-posedness notion considered in this paper is the one due to Dentcheva and Helbig \cite{dh}, which is a natural extension of Tykhonov well-posedness for scalar optimization problems. When a scalar optimization problem is considered it is possible to prove (see e.g. \cite{lucchetti2}) that under some assumptions the set of functions for which the related optimzation problem is well-posed is dense or even more in "big" i.e. contains a dense $G_{\delta}$ set (these results are called genericity results). The aim of this paper is to extend these genericity results to vector optimzaition problems.  
\end{abstract}

\maketitle




\section{Introduction}
Well-posedness properties are important qualitative characterizations for scalar and vector optimization problems.  In particular, the notion of well-posedness plays a central role in stability theory
for scalar optimization (see e.g. \cite{dz}). 
The well-posedness notion for scalar functions dates back to  Hadamard  \cite{hadamard} and to  Tykhonov in \cite{tykhonov}. Extensions to vector and set-valued cases are presented in several papers and are still a topic of research (see e.g. \cite{b1}, \cite{b2},\cite{loridan}, \cite{dh}, \cite{dz}, \cite{lucchetti2}, \cite{mmr}, \cite{huang1}, \cite{huang2}, \cite{gmmn}, \cite{ckr}). In \cite{mmr}  a classification of vector well-posedness notions in two classes is given: pointwise and global notions. 
The definitions of the first  group consider a fixed  efficient  point (or the image of an efficient point) and deal with well-posedness of the vector optimization problem at this point. This approach imposes that the minimizing sequences related to the considered point are well-behaved. Since in the vector case the solution set is typically not a singleton, there is also a class of  definitions, the so-called global notions, that involve the efficient frontier as a whole. \ \\
In the scalar optimization,  a  crucial point is the identification of classes of  objective functions for which the related optimization problem enjoys well-posedness properties. It is known  (see e.g. \cite{dz})   that scalar optimization problems with convex objective function  $f:X \to \RR$, $X$ finite-dimensional,  enjoy well-posedness properties. Similarly, it is known that vector optimzation problems with  cone-convex objective function functions $f: X \to Y$ with $X$  and $Y$ finite-dimensional, enjoy well posedness properties (see. eg. \cite{mmr}).\ \\
 When functions $f: X \to \RR$ with $X$ infinite-dimensional are considered, it is known that convexity does not guarantee well-posedness (see e.g. \cite{dz}).  In this case it is interesting to find sets   of functions for which the subset of  well-posed functions  is dense (when a suitable topology on  the considered  set of functions is introduced). A stronger version of these results leads to find sets of functions for which the subset of well-posed funcions is "big" in the sense of Baire category, i.e. contains a dense $G_{\delta}$ set (see e.g. \cite{il},  \cite{lucchetti2}  and references therein). \ \\
The aim of this paper is to  extend this kind of results, called genericity results,  to vector optimzation problems with objective function  $f: X \to Y$ where $X$ and $Y$ are Banach spaces. In our investigation we will focus on the  pointwise well-posedness notion for vector functions due to Dentcheva and Helbig \cite{dh}. \ \\
The outline of the paper is the following. In  Section \ref{preliminaries} we introduce the notation and we recall some preliminary notions. In Section \ref{wp}  we recall some scalar and vector well-posedness notions. In Section \ref{density} we give results concerning density of well-posed vector  optimization problems, without convexity assumptions.  Section \ref{Genericity} is devoted to genericity results under cone-convexity assumptions.

\section{Preliminaries}\label{preliminaries}

Let $X$ and $Y$ be Banach spaces, $f:X \to Y$  and $C \subseteq Y$ a closed, convex, pointed cone with nonempty interior, endowing $Y$ with a partial order in the following way

\begin{equation}
\begin{split}
y_1\le_C y_2 & \iff y_2-y_1\in C\\
y_1<_C y_2 & \iff y_2-y_1\in {\rm int}\,  C
\end{split}
\end{equation}

In the following for a set $A \subseteq X$ we denote by ${\rm diam}\, A$ the diameter of $A$, i.e. 
$$
{\rm diam}\, A= {\rm sup}\{\|x-y\|: \ x, y \in A\}
$$
We denote respectively by $B$  the closed unit ball   both in $X$ and $Y$  (from the context it will be clear to which space we refer). \ \\ We denote by $Y^*$ the topological dual space of $Y$ and by $C^*$ the positive polar cone of $C$, i.e.
\begin{equation*}
C^*=\{\xi\in Y^*: \langle \xi , c\rangle \ge 0, \ \forall c \in C\}
\end{equation*}
Consider  the vector optimization problem
\begin{equation}\tag{$X, f$}\label{vp}
\min f(x), \ \ x \in X.
\end{equation}
A point $\bar{x} \in X$ is called an   efficient solution for problem (\ref{vp}) when
\begin{equation*}
(f(X) - f(\bar{x})) \cap (-C) = \{ 0 \}
\end{equation*}
We denote by $\Eff(X,f)$ the set of all efficient solutions for problem  (\ref{vp}). 
A point $\bar{x} \in X$ is called a weakly efficient solution for problem  (\ref{vp}) when
\begin{equation*}
(f(X) - f(\bar{x})) \cap (-\rint C) = \emptyset.
\end{equation*}
We denote by $\WEff(X,f)$ the set of weakly efficient solutions for problem  (\ref{vp}). 
We recall also (see e.g.\cite{b3}) that a point $\bar x \in X$ is said to be a strictly efficient solution for problem (\ref{vp}) when, for every $\varepsilon>0$, there exists $\delta>0$ such that
\begin{equation}
(f(X)-f(\bar x))\cap (\delta B -C)\subseteq \varepsilon B
\end{equation}
We denote by ${\rm StEff}(X, f)$ the set of strictly efficient solutions for problem (\ref{vp}). Clearly  ${\rm StEff}(X, f)\subseteq \Eff(X,f) \subseteq {\rm WEff}(X,f)$.

\begin{definition} \cite{luc}
A function $f: X \to Y$,  is said to be $C-$convex if $\forall x,z \in X$ and $t \in [0,1]$ it holds 
\begin{equation*}
f(tx+(1-t)z) \in tf(x) + (1-t)f(z)-C
\end{equation*}
\end{definition}

\begin{proposition}(see e.g. \cite{luc})
$f:X \to Y$ is $C$-convex  if and only if functions $g_{\xi}(x)= \langle \xi , f(x)\rangle$ are convex for every $\xi \in C^*$. 
\end{proposition}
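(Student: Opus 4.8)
The plan is to prove the two implications separately: the direct implication is an immediate consequence of the definitions, while the converse needs a separation argument for the closed convex cone $C$.

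First I would assume $f$ is $C$-convex and fix an arbitrary $\xi \in C^*$. For any $x, z \in X$ and $t \in [0,1]$, $C$-convexity gives $tf(x) + (1-t)f(z) - f(tx+(1-t)z) \in C$. Pairing this element with $\xi$ and using that $\langle \xi, c\rangle \ge 0$ for every $c \in C$, we obtain $t\langle \xi, f(x)\rangle + (1-t)\langle \xi, f(z)\rangle - \langle \xi, f(tx+(1-t)z)\rangle \ge 0$, i.e. $g_\xi(tx+(1-t)z) \le t g_\xi(x) + (1-t) g_\xi(z)$. Since $x, z, t$ were arbitrary, $g_\xi$ is convex, and since $\xi \in C^*$ was arbitrary, all the $g_\xi$ are convex.

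For the converse, suppose every $g_\xi$, $\xi \in C^*$, is convex but, arguing by contradiction, that $f$ is not $C$-convex. Then there exist $x, z \in X$ and $t \in [0,1]$ with $w := tf(x) + (1-t)f(z) - f(tx+(1-t)z) \notin C$. Because $C$ is a closed convex cone in the Banach space $Y$, the bipolar theorem — equivalently, the Hahn--Banach separation theorem applied to the point $w$ and the closed convex set $C$ — yields $\xi \in Y^*$ with $\langle \xi, c\rangle \ge 0$ for all $c \in C$, that is $\xi \in C^*$, and $\langle \xi, w\rangle < 0$. Expanding $\langle \xi, w\rangle < 0$ gives $g_\xi(tx+(1-t)z) > t g_\xi(x) + (1-t) g_\xi(z)$, contradicting the convexity of $g_\xi$. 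Hence $f$ is $C$-convex.

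The only nontrivial ingredient — and the step I would flag as the crux — is the separation claim that any point outside a closed convex cone $C$ can be strictly separated from $C$ by a functional in $C^*$; equivalently, that $C = \{\, y \in Y : \langle \xi, y\rangle \ge 0 \text{ for all } \xi \in C^*\,\}$, which is the bipolar theorem for closed convex cones. Everything else is a direct manipulation of the defining inequalities, so the argument is short once this tool is available.
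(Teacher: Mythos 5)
Your proof is correct: the forward direction is the immediate pairing argument, and the converse correctly rests on the bipolar theorem (strict separation of a point from the closed convex cone $C$ by a functional in $C^*$), which is exactly the standard argument found in the reference \cite{luc} that the paper cites in lieu of a proof. No gaps.
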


We recall also that a function $f: X \to Y$ is said to be $*$-quasiconvex  when functions $g_{\xi}(x)= \langle \xi , f(x)\rangle$ are  quasiconvex for every $\xi \in C^*$ (see e.g. \cite{jeya}). \ \\
 For $y \in Y$,  $L_f^C(y) := \{x \in X : f(x) \in y -C\}$  is the level set of $f$. 
We say that $f:X \to Y$ is $C$-lower semicontinuous ($C$-lsc for short) when $L_f^C(y)$ is closed for every $y \in Y$ \cite{luc}. \ \\
Now, we recall, the notion of oriented distance between a point $y \in Y$  and a set $A\subseteq Y$, denoted by $D_A(y)$. 

\begin{definition}
For a set $A\subseteq  Y$ the oriented distance is the function $D_A : Y \to \RR\cup \{\pm \infty\}$
defined as
\begin{equation}
D_A(y) = d_A(y) - d_{Y \backslash A}(y)
\end{equation}
with $d_{\emptyset}(y) = +\infty$.
\end{definition}

Function $D_A$ was introduced in \cite{hiriart1979, hiriart1979a} to analyze the geometry of nonsmooth optimization problems and obtain necessary optimality conditions. The next result summarizes some basic properties of function $D_A$. 

\begin{proposition} \cite{zaffaroni, ggr}\label{properties} If the set $A$ is nonempty and $A \not = Y$ , then
\begin{itemize}
\item [1.] $D_A$ is real valued;
\item [2.] $D_A$ is $1$-Lipschitzian;
\item [3.] $D_A(y) < 0$ for every $y \in  {\rm int}\, A$, $D_A(y) = 0$ for every $y \in  \partial A$ and $D_A(y) > 0$
for every $y \in  {\rm int}\, (Y\backslash A)$;
\item [4.] if $A$ is closed, then it holds $A = \{y : D_A(y)  \le 0\}$;
\item [5.] if $A$ is convex, then $D_A$ is convex;
\item [6.] if $A$ is a cone, then $D_A$ is positively homogeneouos;
\item [7.] if $A$ is a closed convex cone, then $D_A$ is nonincreasing with respect to the
ordering relation induced on $Y$, i.e. the following is true: if $y_1,  y_2 \in  Y$ then
$y_1 - y_2 \in  A \Rightarrow D_A(y_1) \le  D_A(y_2)$; if $A$ has nonempty interior, then
$y_1 - y_2 \in  {\rm int}\, A \Rightarrow D_A(y_1) < D_A(y_2)$:
\item [8.] It holds
\begin{equation}
D_A(y)=\max_{\xi\in C^*\cap \partial B}\langle \xi , y\rangle 
\end{equation}
where $\partial A$ denotes the boundary of the set $A$. 
\end{itemize}
\end{proposition}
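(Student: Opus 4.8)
The plan is to deduce every item from the elementary identity
\[
D_A(y)=\begin{cases} d_A(y), & y\notin A,\\ -\,d_{Y\setminus A}(y), & y\in A,\end{cases}
\]
valid because at each $y$ at least one of $d_A(y)$, $d_{Y\setminus A}(y)$ vanishes. Items 1, 3 and 4 then follow at once. Since $A\neq\emptyset$ makes $d_A$ finite and $A\neq Y$ makes $d_{Y\setminus A}$ finite, $D_A$ is real valued (item 1). If $y\in\rint A$, a ball around $y$ avoids $Y\setminus A$, so $d_{Y\setminus A}(y)>0$ and $D_A(y)<0$; symmetrically $D_A(y)>0$ on $\rint(Y\setminus A)$; and on $\partial A=\partial(Y\setminus A)$ both distances are $0$, so $D_A=0$ there (item 3). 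If $A$ is closed and $y\notin A$ then $d_A(y)>0$, so $D_A(y)\le0$ already forces $y\in A$, while $y\in A$ gives $D_A(y)=-d_{Y\setminus A}(y)\le0$ (item 4).

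The one genuinely delicate point is the Lipschitz bound of item 2, since a difference of two $1$-Lipschitz functions is only $2$-Lipschitz a priori. I would split on membership in $A$. If $y_1,y_2\in A$ then $D_A(y_i)=-d_{Y\setminus A}(y_i)$ and $|D_A(y_1)-D_A(y_2)|\le\|y_1-y_2\|$ is just the $1$-Lipschitz property of $d_{Y\setminus A}$; likewise if $y_1,y_2\notin A$. In the mixed case, say $y_1\in A$, $y_2\notin A$, the closed sets $\{t\in[0,1]:(1-t)y_1+ty_2\in\overline A\}$ and $\{t\in[0,1]:(1-t)y_1+ty_2\in\overline{Y\setminus A}\}$ cover $[0,1]$, hence meet by connectedness, yielding a point $z\in\overline A\cap\overline{Y\setminus A}=\partial A$ on the segment $[y_1,y_2]$; then $d_A(y_2)\le\|y_2-z\|$ and $d_{Y\setminus A}(y_1)\le\|y_1-z\|$, so, since $D_A(y_1)\le0\le D_A(y_2)$,
\[
|D_A(y_1)-D_A(y_2)|=d_A(y_2)+d_{Y\setminus A}(y_1)\le\|y_2-z\|+\|z-y_1\|=\|y_1-y_2\|.
\]
This segment-crossing step is where I expect the main care to be required.

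Items 6 and 7 are short. If $A$ is a cone then so is $Y\setminus A$, and $d_A$, $d_{Y\setminus A}$ are positively homogeneous, hence so is $D_A$ (item 6). Granting items 4, 5 and 6, a convex positively homogeneous function is subadditive, so $D_A(y_1)=D_A\bigl(y_2+(y_1-y_2)\bigr)\le D_A(y_2)+D_A(y_1-y_2)$; if $A$ is a closed convex cone and $y_1-y_2\in A$ then item 4 gives $D_A(y_1-y_2)\le0$, so $D_A(y_1)\le D_A(y_2)$, and if moreover $y_1-y_2\in\rint A$ then item 3 makes this strict (item 7).

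It remains to obtain items 5 and 8 from a dual representation. For $A$ closed and convex I would prove
\[
D_A(y)=\sup_{\xi\in\partial B}\bigl(\langle\xi,y\rangle-\sigma_A(\xi)\bigr),\qquad \sigma_A(\xi):=\sup_{a\in A}\langle\xi,a\rangle,
\]
again by cases: for $y\notin A$, Hahn--Banach separation of $y$ from $A$ produces a unit $\xi$ with $\langle\xi,y\rangle-\sigma_A(\xi)=d_A(y)=D_A(y)$, which no unit functional can beat; for $y\in A$, the inclusion $y+rB\subseteq A$ is equivalent to $\langle\xi,y\rangle+r\le\sigma_A(\xi)$ for every $\xi\in\partial B$, whence $d_{Y\setminus A}(y)=\sup\{r\ge0:y+rB\subseteq A\}=\inf_{\xi\in\partial B}\bigl(\sigma_A(\xi)-\langle\xi,y\rangle\bigr)$ and the displayed formula follows once more. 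Being a supremum of affine functions of $y$, $D_A$ is convex (item 5), and for a general convex $A$ one passes to $\overline A$, which alters neither $d_A$ nor $d_{Y\setminus A}$. Item 8 is the case $A=-C$: then $\sigma_{-C}(\xi)=\sup_{c\in C}\langle\xi,-c\rangle$ equals $0$ on $C^*$ and $+\infty$ off it, so the formula collapses to $D_{-C}(y)=\sup_{\xi\in C^*\cap\partial B}\langle\xi,y\rangle$ (note $C^*\neq\{0\}$ since $C\neq Y$), and this supremum is attained --- hence a maximum --- either by the separating functional when $D_{-C}(y)\ge0$ or by weak-$*$ compactness of $C^*\cap B$ (Banach--Alaoglu, with $C^*$ weak-$*$ closed).
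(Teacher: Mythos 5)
The paper offers no proof of this proposition --- it is quoted verbatim from \cite{zaffaroni, ggr} --- so there is nothing internal to compare against; I can only assess your argument on its own terms. As a self-contained proof it is essentially sound and well organized. The reduction to the identity $D_A=d_A$ off $A$ and $D_A=-d_{Y\setminus A}$ on $A$ dispatches items 1, 3, 4 and 6 correctly; your segment-crossing argument for the $1$-Lipschitz bound is exactly the right way to beat the naive constant $2$, and the connectedness justification for the existence of the boundary point $z$ is valid. The derivation of item 7 from convexity plus positive homogeneity via subadditivity is clean, and the support-function representation $D_A(y)=\sup_{\xi\in\partial B}(\langle\xi,y\rangle-\sigma_A(\xi))$ correctly yields item 5 and, specialized to $A=-C$ (you rightly read the garbled item 8 as a statement about $D_{-C}$), the formula $D_{-C}(y)=\sup_{\xi\in C^*\cap\partial B}\langle\xi,y\rangle$. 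Two small caveats. First, in item 5 the reduction of a general convex $A$ to $\overline A$ uses $d_{Y\setminus A}=d_{Y\setminus\overline A}$, which requires ${\rm int}\, A={\rm int}\,\overline A$; this is standard for convex sets but fails in the degenerate case $\overline A=Y$ (e.g.\ a dense hyperplane), where one should instead observe directly that $D_A\equiv 0$.

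The one genuine gap is the attainment claim in item 8 for $y\in{\rm int}\,(-C)$. There all values $\langle\xi,y\rangle$, $\xi\in C^*\setminus\{0\}$, are strictly negative, so the maximizer of $\langle\cdot,y\rangle$ over the weak-$*$ compact set $C^*\cap B$ is $\xi=0$, which lies neither on $\partial B$ nor anywhere near the value $-d_{Y\setminus(-C)}(y)$ you need; Banach--Alaoglu gives you a weak-$*$ cluster point of a maximizing sequence in $C^*\cap B$, but nothing forces that cluster point to have norm one, since the sphere is not weak-$*$ closed in infinite dimensions. So your argument establishes the formula with $\sup$ in all cases and with $\max$ when $D_{-C}(y)\ge 0$ (separation, or a supporting functional at a boundary point), but the attainment on ${\rm int}\,(-C)$ needs a further argument (or the statement should be weakened to a supremum, which is all the paper ever uses).
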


\begin{theorem}\cite{mmr}
If  $f:X \to Y$ is $C$-convex, then for every $y \in Y$, function $D_{-C}(f(x)-y)$ is convex. 
\end{theorem}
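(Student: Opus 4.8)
The plan is to reduce the convexity of the map $x \mapsto D_{-C}(f(x)-y)$ to a composition of two structural facts already available: first, that $D_{-C}$ is itself a convex function on $Y$ (this is item 5 of Proposition \ref{properties}, since $-C$ is a convex set whenever $C$ is); second, that $D_{-C}$ is monotone nonincreasing with respect to the order induced by $C$, which is the content of item 7 of Proposition \ref{properties} applied to the closed convex cone $A=-C$ — note that $y_1 - y_2 \in -C$ means $y_1 \le_C y_2$, and then $D_{-C}(y_1) \le D_{-C}(y_2)$, so $D_{-C}$ reverses the order. The classical principle is that a convex function that is monotone (in the appropriate direction) post-composed with a map that is convex in the sense of the ordering cone yields a convex function; I would verify this directly in our setting.

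The key computation is as follows. Fix $y \in Y$, take $x, z \in X$ and $t \in [0,1]$, and set $w = tx + (1-t)z$. By $C$-convexity of $f$,
\begin{equation*}
f(w) \in t f(x) + (1-t) f(z) - C,
\end{equation*}
hence $f(w) - y \in t(f(x)-y) + (1-t)(f(z)-y) - C$, i.e. the vector $u := t(f(x)-y)+(1-t)(f(z)-y)$ satisfies $f(w)-y \le_C u$. Applying the monotonicity property (item 7 with $A=-C$) gives
\begin{equation*}
D_{-C}(f(w)-y) \le D_{-C}(u) = D_{-C}\big(t(f(x)-y) + (1-t)(f(z)-y)\big).
\end{equation*}
Now apply convexity of $D_{-C}$ (item 5) to bound the right-hand side by
\begin{equation*}
t\, D_{-C}(f(x)-y) + (1-t)\, D_{-C}(f(z)-y),
\end{equation*}
which chains together to give exactly the convexity inequality for $x \mapsto D_{-C}(f(x)-y)$.

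One small point worth checking for rigor: that $D_{-C}$ is real valued on all of $Y$, so that the inequalities above involve no $\pm\infty$ ambiguities; this follows from item 1 of Proposition \ref{properties}, since $-C$ is nonempty and, because $C$ has nonempty interior and is pointed, $-C \neq Y$. I do not anticipate a serious obstacle here — the whole argument is the two-line "monotone convex composed with cone-convex" lemma, and the only care needed is getting the direction of the monotonicity right (it is the $-C$ cone, not $C$, that governs the level sets, and $D_{-C}$ is \emph{nonincreasing} in $\le_C$). If one prefers an alternative route, item 8 expresses $D_{-C}(\cdot)$ as a supremum of linear functionals $\langle \xi, \cdot \rangle$ over $\xi$ in a suitable subset of $C^*$; then $D_{-C}(f(x)-y) = \sup_{\xi} (\langle \xi, f(x)\rangle - \langle \xi, y\rangle)$, each summand is convex in $x$ by the scalarization proposition characterizing $C$-convexity, and a supremum of convex functions is convex — but the monotonicity argument is cleaner and more self-contained.
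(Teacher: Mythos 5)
Your proof is correct. The paper states this theorem only as a citation to \cite{mmr} and supplies no proof of its own, so there is no in-paper argument to compare against; the composition argument you give (cone-convexity of $f$, plus monotonicity and convexity of $D_{-C}$ from items 7 and 5 of Proposition \ref{properties}) is the standard one, and the alternative you sketch via the representation $D_{-C}(\cdot)=\max_{\xi\in C^*\cap\partial B}\langle \xi,\cdot\rangle$ combined with convexity of the scalarizations $\langle\xi,f(\cdot)\rangle$ is equally sound. One terminological caution: you twice describe $D_{-C}$ as ``nonincreasing'' with respect to $\le_C$ (``reverses the order''), whereas the implication you correctly extract from item 7 with $A=-C$, namely $y_1\le_C y_2 \Rightarrow D_{-C}(y_1)\le D_{-C}(y_2)$, says that $D_{-C}$ is \emph{nondecreasing} for $\le_C$; the ``nonincreasing'' in Proposition \ref{properties} is relative to the order induced by the cone $A$ itself, which for $A=-C$ is the reverse of $\le_C$. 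Since every displayed inequality in your argument points the right way, this is a naming slip only and the proof stands.
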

We associate to problem (\ref{vp}) the scalar problem 

\begin{equation*}\label{sp}\tag{$X, D_{-C}$}
\min D_{-C}(f(x)-f(\bar x))\ , \ x \in X
\end{equation*}
with $\bar x \in X$. 
The relations of the solutions of this problem with those of problem (\ref{vp} ) are investigated in \cite{zaffaroni, mmr, ggr}. For the convenience of the reader, we quote the characterization of efficient  points and weakly efficient  points.
\begin{theorem}\cite{zaffaroni, mmr, ggr}\label{min-car}
Let $f: X \to Y$. 
\begin{itemize}
\item [1.] $\bar x \in \WEff(X, f)$ if and only if $\bar x$ is a solution of problem (\ref{sp}). 
\item [2.] If  $\bar x$ is the unique solution of problem (\ref{sp}), then $ \bar x \in \Eff(X, f)$.  
\end{itemize}
\end{theorem}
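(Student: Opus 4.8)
The plan is to reduce everything to the sign trichotomy of the oriented distance function $D_{-C}$ collected in Proposition \ref{properties}, applied with $A = -C$. First I would check the hypotheses: since $C$ is a closed convex pointed cone with nonempty interior, $-C$ is a nonempty closed convex cone and $-C \neq Y$ (otherwise $C = Y$, contradicting pointedness), so items 1--4 of Proposition \ref{properties} apply to $D_{-C}$. The facts I will use repeatedly are: $D_{-C}(0) = 0$, because $0 \in \partial(-C)$ (a pointed cone with nonempty interior has $0$ on its boundary); by item 3, $D_{-C}(y) < 0 \iff y \in -\rint C$; and, since $-C$ is closed, $D_{-C}(y) \le 0 \iff y \in -C$ by item 4. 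Equivalently, $D_{-C}(y) \ge 0 \iff y \notin -\rint C$.

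For part 1, observe that the scalar objective in (\ref{sp}) takes the value $D_{-C}(0) = 0$ at $x = \bar x$; hence ``$\bar x$ solves (\ref{sp})'' is equivalent to ``$D_{-C}(f(x) - f(\bar x)) \ge 0$ for every $x \in X$''. (If some point gave a negative value, $\bar x$ could not be a minimizer; conversely, if all values are $\ge 0$ then $\bar x$, attaining $0$, is a global minimizer.) By the sign equivalence above, this is in turn equivalent to $f(x) - f(\bar x) \notin -\rint C$ for all $x$, i.e.\ $(f(X) - f(\bar x)) \cap (-\rint C) = \emptyset$, which is precisely $\bar x \in \WEff(X, f)$.

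For part 2, assume $\bar x$ is the unique minimizer of (\ref{sp}); then, as above, its optimal value is $0$. Take any $x$ with $f(x) - f(\bar x) \in -C$; the goal is to show $f(x) = f(\bar x)$. From $f(x) - f(\bar x) \in -C$ and item 4, $D_{-C}(f(x) - f(\bar x)) \le 0$, while optimality of $\bar x$ gives $D_{-C}(f(x) - f(\bar x)) \ge 0$; hence the value equals $0$, so $x$ is itself a minimizer of (\ref{sp}), and uniqueness forces $x = \bar x$, whence $f(x) - f(\bar x) = 0$. Therefore $(f(X) - f(\bar x)) \cap (-C) = \{0\}$, i.e.\ $\bar x \in \Eff(X, f)$.

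There is no serious obstacle here: the whole argument is bookkeeping with the signs of $D_{-C}$. The only point needing a little care is in part 2, namely that ``unique minimizer'' already rules out nonzero elements of $(f(X) - f(\bar x)) \cap (-C)$ lying on $\partial(-C)$ — this is handled automatically by the chain above, since any such element still produces a point attaining the minimal value $0$, contradicting uniqueness unless it equals $\bar x$.
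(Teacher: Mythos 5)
Your argument is correct and is exactly the standard proof of this scalarization result (the paper itself states the theorem without proof, citing \cite{zaffaroni, mmr, ggr}, where the argument runs along the same lines): everything follows from the sign trichotomy of $D_{-C}$ on $-\rint C$, $\partial(-C)$, and the exterior, together with $D_{-C}(0)=0$. Both parts are handled cleanly, including the boundary subtlety in part 2, so there is nothing to add.
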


\section{Well-posedness for scalar and vector optimization problems}\label{wp}
\subsection{Well-posedness for scalar optimization problems}

In this section we recall the notion of  well-posednsess for  functions $f:X\to \RR$ introduced by Tykhonov \cite{tykhonov}. For  a complete treatment of this notion and of its generalizations one can refer to \cite{dz, lucchetti2}.  
Clearly in this case problem (\ref{vp}) reduces to a scalar minimization problem. 
\begin{definition}
Let  $f:X\to \RR$. Problem (\ref{vp}) is said to be  Tykhonov  well-posed (T-wp for short)  if: 
\begin{itemize}
\item [1.] there exists a unique $\bar x \in X$ such that $f(\bar x) \le f(x)$, $\forall x \in X$; 
\item [2.] every sequence $x_n$ such that $f(x_n)\to \inf_X f$ is such that $x_n \to \bar x$. 
\end{itemize}
\end{definition}

Next proposition provides a useful characterization of Tykhonov well-posedness. It is called the Furi-Vignoli criterion \cite{fv1}. 

\begin{proposition}
Let $f: X \to \RR$ be lsc. The following alternatives  are equivalent: 

\begin{itemize}
\item [1.]Problem (\ref{vp}) is T-wp; 
\item [2.] ${\rm inf}_{ a>{\rm inf}_X f} \ {\rm diam}\ L_f(a)=0$, where  $L_f(a)=\{x\in X: f(x)\le a\}$. 
\end{itemize}
\end{proposition}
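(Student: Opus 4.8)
The plan is to prove the two implications separately, in both cases by tracking minimizing sequences inside the sublevel sets $L_f(a)$ and exploiting that $X$ is complete. Set $m:=\inf_X f$ and observe first that $a\mapsto L_f(a)$ is nondecreasing, so $a\mapsto\diam L_f(a)$ is nondecreasing and therefore $\inf_{a>m}\diam L_f(a)=\lim_{a\downarrow m}\diam L_f(a)$. I record this reduction at the outset so that later I may freely pick a single $a>m$ with $\diam L_f(a)$ as small as desired.

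\emph{$(1)\Rightarrow(2)$.} I would argue by contraposition: assume $\inf_{a>m}\diam L_f(a)=2\varepsilon>0$. Choose $a_n\downarrow m$ with $a_n>m$; then $\diam L_f(a_n)\ge 2\varepsilon$, so I can pick $x_n,y_n\in L_f(a_n)$ with $\norm{x_n-y_n}\ge\varepsilon$. Since $f(x_n)\le a_n\to m$ and $f(y_n)\le a_n\to m$, both $(x_n)$ and $(y_n)$ are minimizing sequences, hence by Tykhonov well-posedness both converge to the unique minimizer $\bar x$; this forces $\norm{x_n-y_n}\to 0$, contradicting $\norm{x_n-y_n}\ge\varepsilon$. (Note this direction does not use lower semicontinuity.)

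\emph{$(2)\Rightarrow(1)$.} Assume $\lim_{a\downarrow m}\diam L_f(a)=0$. Let $(x_n)$ be any minimizing sequence. Given $\varepsilon>0$, fix $a>m$ with $\diam L_f(a)<\varepsilon$; for $n$ large enough $f(x_n)<a$, i.e. $x_n\in L_f(a)$, hence $\norm{x_n-x_k}\le\diam L_f(a)<\varepsilon$ for all large $n,k$. Thus $(x_n)$ is Cauchy, and since $X$ is Banach it converges to some $\bar x\in X$; by $C$-lower semicontinuity (here ordinary lsc) $f(\bar x)\le\liminf_n f(x_n)=m$, so $f(\bar x)=m$ and $\bar x$ is a minimizer (in particular $m$ is finite, since $f$ is real-valued). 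For uniqueness, if $\bar x_1,\bar x_2$ both minimize $f$, then $f(\bar x_i)=m<a$ for every $a>m$, so $\bar x_1,\bar x_2\in L_f(a)$ and $\norm{\bar x_1-\bar x_2}\le\diam L_f(a)\to 0$, whence $\bar x_1=\bar x_2=:\bar x$. Combining: every minimizing sequence converges and its limit is a minimizer, and the minimizer is unique, so every minimizing sequence converges to $\bar x$ — which is precisely T-wp.

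The argument is essentially routine; the only points requiring care are the degenerate possibilities (ruling out $m=-\infty$, which follows from lsc together with completeness exactly as above, and handling $L_f(a)=\emptyset$, which cannot occur for $a>m$), and making sure completeness of $X$ is invoked for the Cauchy step and lower semicontinuity for identifying the limit as a minimizer. There is no substantive obstacle beyond keeping these hypotheses in their proper places.
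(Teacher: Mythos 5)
Your proof is correct. The paper does not prove this statement at all — it quotes it as the Furi--Vignoli criterion with a citation to \cite{fv1} — and your argument is the standard one for that result: the contradiction via two separated minimizing sequences for $(1)\Rightarrow(2)$, and the Cauchy/completeness/lsc argument for $(2)\Rightarrow(1)$, with the degenerate cases ($m=-\infty$, empty sublevel sets, uniqueness of the minimizer) all handled properly.
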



The following result regarding well-posedness of convex functions defined on a finite-dimensional space is well-known. 

\begin{theorem} (see e.g. \cite{dz})\label{th-wp-convex}
Let $X$ be finite-dimensional and $f: X \to \RR$ be  a convex function with a unique minimizer. Then problem (\ref{vp}) is T-wp. 
\end{theorem}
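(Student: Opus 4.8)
The plan is to verify the Furi--Vignoli criterion from the previous proposition, i.e. to show that $\inf_{a > \inf_X f}\,\diam L_f(a) = 0$, and then conclude $T$-well-posedness. Since $X$ is finite-dimensional and $f$ is convex, $f$ is automatically continuous, hence lsc, so the proposition applies. Let $\bar x$ be the unique minimizer, set $m = f(\bar x) = \inf_X f$, and argue by contradiction: suppose $\inf_{a > m}\,\diam L_f(a) = r > 0$. Then for every $a > m$ the sublevel set $L_f(a)$ has diameter at least $r$, so there exist points $x_a, z_a \in L_f(a)$ with $\norm{x_a - z_a} \ge r/2$.

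First I would extract from this a sequence: pick $a_n \downarrow m$ and corresponding points $x_n, z_n$ with $f(x_n) \le a_n$, $f(z_n) \le a_n$, and $\norm{x_n - z_n} \ge r/2$. Since $\norm{x_n - \bar x} + \norm{z_n - \bar x} \ge r/2$, at least one of the two, say $\norm{x_n - \bar x}$, is $\ge r/4$ infinitely often; pass to a subsequence so that $\norm{x_n - \bar x} \ge r/4$ for all $n$ and $f(x_n) \to m$. Now normalize: let $u_n = \bar x + \frac{r}{4}\cdot\frac{x_n - \bar x}{\norm{x_n - \bar x}}$, the point on the segment $[\bar x, x_n]$ (or its extension — actually it lies in the segment since $\norm{x_n-\bar x}\ge r/4$) at distance exactly $r/4$ from $\bar x$. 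By convexity of $f$, writing $u_n = (1-t_n)\bar x + t_n x_n$ with $t_n = \frac{r/4}{\norm{x_n-\bar x}} \in (0,1]$, we get $f(u_n) \le (1-t_n)f(\bar x) + t_n f(x_n) = m + t_n(f(x_n) - m) \le m + (f(x_n) - m) \to m$.

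Next, since $\{u_n\}$ lies on the sphere of radius $r/4$ centered at $\bar x$, which is compact in the finite-dimensional space $X$, pass to a further subsequence $u_n \to u^*$ with $\norm{u^* - \bar x} = r/4 > 0$, so $u^* \ne \bar x$. By lower semicontinuity of $f$ (continuity suffices), $f(u^*) \le \liminf f(u_n) \le m = \inf_X f$, hence $f(u^*) = m$ and $u^*$ is a minimizer distinct from $\bar x$, contradicting uniqueness. Therefore $\inf_{a > m}\,\diam L_f(a) = 0$, and by the Furi--Vignoli criterion problem (\ref{vp}) is $T$-wp.

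The only genuinely delicate point is the compactness step, which is exactly where finite-dimensionality of $X$ is used — in infinite dimensions the sphere is not compact and the argument (indeed the conclusion) fails, matching the remark in the introduction that convexity alone does not guarantee well-posedness in infinite dimensions. Everything else is a routine convexity-plus-lsc argument. One could alternatively phrase the whole thing directly in terms of minimizing sequences rather than through the Furi--Vignoli criterion, with essentially the same normalization-and-compactness core; I would keep the criterion-based version for brevity.
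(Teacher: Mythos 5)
Your argument is correct. The paper itself gives no proof of this theorem (it is quoted from \cite{dz}), but your proof is the standard one: reduce to the Furi--Vignoli criterion, normalize a putative non-shrinking minimizing sequence onto a small sphere around $\bar x$ using convexity, and use compactness of that sphere in finite dimensions to produce a second minimizer, contradicting uniqueness. All the steps check out, including the use of automatic continuity of real-valued convex functions on a finite-dimensional space to justify applying the criterion, and you correctly identify the compactness step as the one that breaks down in infinite dimensions.
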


Theorem \ref{th-wp-convex} does not hold when $X$ is infinite-dimensional as the following example shows (see e.g. \cite{dz}). 

\begin{example}
Let $X$ be a separable Hilbert  space with orthonormal basis $\{e_n, n \in \NN\}$. Let $f(x)=\sum_{n=1}^{+\infty}\frac{\langle x, e_n\rangle^2}{n^2}$. Then $f$ is continuous, convex and has $\bar x=0$ as  unique minimizer, but problem (\ref{vp})  is not T-wp. Indeed the sequence $\sqrt n e_n$ is an unbounded minimizing sequence. 
\end{example}

Consider now the space 
$$
\Gamma : = \left\{f:X\to \RR : f \mbox{ is } \mbox{convex and lower semicontinuous} \right\}.
$$ 
We endow $\Gamma$ with a distance  compatible with the uniform convergenge on bounded sets. Fix $\theta \in X$ and set for any two functions $f,g \in \Gamma$ and $i \in \NN$,
$$
\norm{f-g}_{i} = \sup_{\norm{x - \theta} \leq i} |f(x) - g(x)|.
$$
If $\norm{f-g}_{i} = \infty$ for some $i$, then set $d(f,g) = 1$, otherwise
$$
d(f,g) = \sum_{i=1}^{\infty}2^{-i}\frac{\norm{f-g}_{i}}{1+\norm{f-g}_{i}}.
$$

When $X$ is infinite-dimensional, it can be shown that the set of functions $f\in \Gamma$ such that  problem (\ref{vp}) is T-wp is "big" in the sense that contains a dense $G_{\delta}$ set (see e.g. \cite{lucchetti2})

\begin{theorem}\label{density-scalar}\cite{lucchetti2}
Let $X$ be a Banach space and consider the set $\Gamma$, equipped with the  topology of uniform convergence. Then  the set of functions $f\in \Gamma$ such that  problem (\ref{vp}) is  T-wp  contains a dense $G_{\delta}$ set . 
\end{theorem}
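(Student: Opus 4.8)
The plan is to reduce $T$-well-posedness to the Furi--Vignoli criterion and then run a Baire category argument in $(\Gamma,d)$. Writing $m(f):=\inf_X f$ and $L_f(a):=\{x\in X:f(x)\le a\}$, I would set, for $n\in\NN$, $U_n:=\{\,f\in\Gamma:\ \exists\,a\in\RR,\ a>m(f),\ \diam L_f(a)<1/n\,\}$, and record two facts about members of $\Gamma$: a finite-valued convex lsc function on a Banach space is automatically continuous (its closed sublevel sets $\{f\le k\}$, $k\in\NN$, cover $X$, so one has nonempty interior by Baire, hence $f$ is locally bounded above and therefore locally Lipschitz), and it admits a continuous affine minorant (Fenchel), so it is bounded below on bounded sets --- in particular $m(f)>-\infty$ whenever some $L_f(a)$ is bounded and nonempty. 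By the Furi--Vignoli criterion, the set of $f\in\Gamma$ for which problem~(\ref{vp}) is $T$-wp equals $\bigcap_{n\ge1}U_n$; and $(\Gamma,d)$ is a complete metric space, since a $d$-Cauchy sequence converges uniformly on every ball $\{\norm{x-\theta}\le i\}$ to a real-valued convex continuous function. So everything reduces to proving that each $U_n$ is open and dense, after which Baire's theorem yields the statement.

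To show $U_n$ is open I would fix $f\in U_n$ and $a>m(f)$ with $\rho:=\diam L_f(a)<1/n$ (so $L_f(a)$ is bounded and $m:=m(f)$ finite), put $\sigma:=(a-m)/3$ and $a_2:=a-\sigma$, choose $R$ with $L_f(a)\subseteq\{\norm{x-\theta}\le R-1\}$ and a point $p\in L_f(m+\sigma)$ (nonempty, as $m+\sigma>\inf_X f$), and argue that every $g\in\Gamma$ with $\norm{f-g}_R<\sigma$ --- a condition holding throughout a $d$-neighbourhood of $f$ --- lies in $U_n$. Indeed $f>a$ on the sphere $\norm{x-\theta}=R$, hence $g>a-\sigma=a_2$ there, whereas $g(p)<f(p)+\sigma\le a_2$; convexity of $g$ then propagates $g>a_2$ to every $x$ with $\norm{x-\theta}>R$, because the segment $[p,x]$ meets that sphere at a point $z$ with $a_2<g(z)\le(1-t)g(p)+tg(x)$, forcing $g(x)>a_2$. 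Thus $L_g(a_2)\subseteq\{\norm{x-\theta}\le R\}$, where $|f-g|<\sigma$, so $L_g(a_2)\subseteq L_f(a_2+\sigma)=L_f(a)$ and $\diam L_g(a_2)\le\rho<1/n$; since $m(g)\le g(p)<a_2$ as well, $g\in U_n$.

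To show $U_n$ is dense, given $f\in\Gamma$, $\varepsilon>0$ and $n$, I would pick $N$ with $2^{-N}<\varepsilon/2$, let $v:=\inf_{\norm{x-\theta}\le N}f(x)$, and first replace $f$ by $f_1(x):=\max\bigl(f(x),\,v+\norm{x-\theta}-N\bigr)$. This $f_1$ lies in $\Gamma$, agrees with $f$ on $\{\norm{x-\theta}\le N\}$ (so $d(f,f_1)\le2^{-N}$), and satisfies $f_1(x)\ge v+\norm{x-\theta}-N$, so with $w:=\inf_X f_1\le v$ one has $\{f_1\le w+s\}\subseteq\{\norm{x-\theta}\le N+s\}$. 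I would then pick $x^*$ with $f_1(x^*)<w+\delta^2$ --- whence $\norm{x^*-\theta}\le N+1$ --- and set $g:=f_1+\delta\norm{\cdot-x^*}$. A routine estimate gives $\diam L_g(\inf g+\delta^2)\le4\delta$ and $d(f_1,g)\le\delta(N+3)$, so choosing $0<\delta<\min\{1/(4n),\,\varepsilon/(2(N+3))\}$ makes $g\in U_n$ and $d(f,g)<\varepsilon$.

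The main obstacle is the openness of $U_n$. Since the metric $d$ only controls a perturbation $g$ on balls centred at the fixed point $\theta$, a priori $g$ could behave badly far away --- for instance become unbounded below --- and destroy the diameter condition; the convexity-propagation step is exactly what confines the relevant sublevel set of $g$ to a ball where $f$ and $g$ are uniformly close. The density step conceals the twin difficulty that near-minimizers of $f$ may sit far from $\theta$, so that $f+\delta\norm{\cdot-x^*}$ need not be $d$-close to $f$; taking a maximum with a linearly growing term first imposes a linear-in-$\norm{x-\theta}$ lower bound on $f_1$, which confines all near-minimizers to a fixed ball and repairs this.
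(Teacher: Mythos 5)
This theorem is quoted in the paper from \cite{lucchetti2} without proof, so there is no in-paper argument to compare against; your proof is, in substance, the standard one from that reference (Furi--Vignoli criterion, Baire category in the complete space $(\Gamma,d)$, with $U_n$ open and dense). The argument is correct, and you have handled the two genuinely delicate points properly: the convexity-propagation step that confines $L_g(a_2)$ to the ball where $\norm{f-g}_R<\sigma$ (which is what makes openness survive a metric that only controls perturbations near $\theta$), and the preliminary replacement $f_1=\max(f,\,v+\norm{\cdot-\theta}-N)$ that forces near-minimizers into a fixed ball before adding the Ekeland-type term $\delta\norm{\cdot-x^*}$.
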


If the convexity assumption is dropped weaker variants  of Theorem \ref{density-scalar} hold, in which density of the class of  functions $f \in \Gamma$ such that problem (\ref{vp}) is T-wp is proven.  Next  results (see e.g. \cite{lucchetti2}) will be useful in the following. 

\begin{proposition}
Let $f : X \to \RR $,  assume  $f$ has a minimum point $\bar x\in X$ and let  $g(x) =f(x) + a\| x- \bar x\|$  with $a>0$. Then problem $(X, g)$ is T-wp.
\end{proposition}

\begin{theorem}{\bf (Ekeland's Variational Principle)}\label{evp}
Let $f : X \to \RR$  be a lsc, lower bounded function. Let $\varepsilon > 0$, $r > 0$ and $\bar x \in  X$ be such
that $f( \bar x) < \inf_X f + r\varepsilon$. Then, there exists $\hat x \in  X$ enjoying the following
properties:
\begin{itemize}
\item [1.] $\|\hat x- \bar x\| < r$;
\item [2.] $f(\hat x) < f(\bar x) - \varepsilon \|\bar x-\hat x\|$;
\item [3.] Problem $(X, g)$ with $g(x)=f(x) + \varepsilon \|\hat x- x\|$ is T-wp.
\end{itemize}
\end{theorem}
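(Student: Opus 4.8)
The plan is to run the classical Ekeland iteration, but --- crucially --- with the perturbation constant $\lambda\varepsilon$ for a suitable $\lambda\in(0,1)$ rather than $\varepsilon$ itself, so that the leftover factor $(1-\lambda)\varepsilon$ can be absorbed at the end via the Proposition that precedes this theorem. First, since $f(\bar x)<\inf_X f+r\varepsilon$ \emph{strictly}, fix $\lambda\in(0,1)$ with $\lambda>\bigl(f(\bar x)-\inf_X f\bigr)/(r\varepsilon)$, and introduce on $X$ the relation $x\preceq y\iff f(x)+\lambda\varepsilon\|x-y\|\le f(y)$. One checks at once that $\preceq$ is reflexive, antisymmetric, and --- using the triangle inequality --- transitive.

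Next I would build a $\preceq$-decreasing sequence. Put $x_0=\bar x$ and, given $x_n$, set $S_n=\{x\in X: x\preceq x_n\}$. This set is nonempty (it contains $x_n$) and closed, since $x\mapsto f(x)+\lambda\varepsilon\|x-x_n\|$ is lsc as a sum of an lsc and a continuous function and $S_n$ is one of its sublevel sets; moreover $\inf_{S_n}f\ge\inf_X f>-\infty$. Choose $x_{n+1}\in S_n$ with $f(x_{n+1})\le\inf_{S_n}f+2^{-n-1}$. Transitivity gives $S_{n+1}\subseteq S_n$, and for any $x\in S_{n+1}$ one has $\lambda\varepsilon\|x-x_{n+1}\|\le f(x_{n+1})-f(x)\le 2^{-n-1}$ (using $f(x)\ge\inf_{S_n}f$), so $\diam S_{n+1}\le 2^{-n}/(\lambda\varepsilon)\to 0$. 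Since $X$ is complete, Cantor's intersection theorem yields $\bigcap_n S_n=\{\hat x\}$ for a unique point $\hat x$.

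It then remains to read off the three properties. From $\hat x\in S_0$ we get $\hat x\preceq\bar x$, i.e. $f(\hat x)+\lambda\varepsilon\|\hat x-\bar x\|\le f(\bar x)$; in particular $f(\hat x)\le f(\bar x)-\lambda\varepsilon\|\hat x-\bar x\|\le f(\bar x)$, which is property $2$, and also $\|\hat x-\bar x\|\le (f(\bar x)-f(\hat x))/(\lambda\varepsilon)\le(f(\bar x)-\inf_X f)/(\lambda\varepsilon)<r$ by the choice of $\lambda$, which is property $1$. For property $3$, first note that $\hat x$ is $\preceq$-minimal: if $x\preceq\hat x$ then, since $\hat x\preceq x_n$ for every $n$, transitivity gives $x\in\bigcap_n S_n=\{\hat x\}$. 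Minimality says exactly that $\tilde g(x):=f(x)+\lambda\varepsilon\|x-\hat x\|$ satisfies $\tilde g(x)>\tilde g(\hat x)$ for all $x\ne\hat x$, so $\tilde g$ attains its global minimum at $\hat x$. Now write $g(x)=f(x)+\varepsilon\|\hat x-x\|=\tilde g(x)+(1-\lambda)\varepsilon\|x-\hat x\|$ with $(1-\lambda)\varepsilon>0$; applying the preceding Proposition to the function $\tilde g$, which has the minimum point $\hat x$, gives that $(X,g)$ is Tykhonov well-posed, which is property $3$.

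The technical heart --- and the place I would expect to spend the most care --- is the iteration: the estimate $\diam S_n\to 0$ genuinely requires the near-optimal choice $f(x_{n+1})\le\inf_{S_n}f+2^{-n-1}$ (merely taking $x_{n+1}\ne x_n$ with $x_{n+1}\preceq x_n$ would not force the diameters to shrink), and without it the limit point $\hat x$ need not be $\preceq$-minimal, so property $3$ would fail. The second point to watch is the bookkeeping of the constant: one needs $\lambda<1$ so that a genuine surplus $(1-\lambda)\varepsilon>0$ is left over to feed into the Proposition, yet $\lambda$ close enough to $1$ --- which is possible precisely because the hypothesis $f(\bar x)<\inf_X f+r\varepsilon$ is strict --- to keep the bound $\|\hat x-\bar x\|<r$ in property $1$.
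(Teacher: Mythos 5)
The paper never proves this theorem --- it is imported verbatim from the literature (Lucchetti's book) --- so your argument has to stand on its own. Most of it does: the order $\preceq$ built with the reduced constant $\lambda\varepsilon$, the nearly-optimal iteration forcing $\diam S_n\to 0$, Cantor's intersection theorem in the complete space $X$, the $\preceq$-minimality of $\hat x$, and the final step writing $g=\tilde g+(1-\lambda)\varepsilon\|\cdot-\hat x\|$ and invoking the Proposition that precedes the theorem are all correct, and this is indeed the standard route to the \emph{well-posedness} form of Ekeland's principle. Properties 1 and 3 are fully established, and your closing remarks about why the near-optimal choice of $x_{n+1}$ and the strictness of the hypothesis matter are exactly right.

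The gap is property 2. From $\hat x\in S_0$ you obtain $f(\hat x)\le f(\bar x)-\lambda\varepsilon\|\hat x-\bar x\|$, and since $\lambda<1$ this is \emph{strictly weaker} than the asserted $f(\hat x)< f(\bar x)-\varepsilon\|\bar x-\hat x\|$ --- weaker both in the constant (you lose a factor $\lambda$) and in the strictness --- so the phrase ``which is property 2'' is not justified. The difficulty is structural, not cosmetic: getting the full constant $\varepsilon$ in property 2 requires building the order with $\varepsilon$ itself, but then no surplus is left over to feed into the Proposition, and strict minimality of $f+\varepsilon\|\cdot-\hat x\|$ at $\hat x$ does not by itself imply Tykhonov well-posedness (in infinite dimensions a minimizing sequence can stall at positive distance from $\hat x$). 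So your method cannot deliver 2 and 3 simultaneously as stated. Two mitigating observations: the statement of property 2 is itself defective --- with a strict inequality it is false whenever $\hat x=\bar x$ is forced, e.g.\ when $\bar x$ already minimizes $f$, so ``$\le$'' must be intended; and the rest of the paper (the proofs of Theorem \ref{density-C-bounded} and of the genericity theorem in Section \ref{Genericity}) only ever uses properties 1 and 3 together with $f(\hat x)\le f(\bar x)$, which your version does supply. Still, as a proof of the theorem as written, you must either weaken conclusion 2 to $f(\hat x)\le f(\bar x)-\lambda\varepsilon\|\hat x-\bar x\|$ (for $\lambda<1$ arbitrarily close to $1$) or give a separate argument for the full constant; as it stands the claim is not proved.
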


\subsection{Well-posedness for vector optimization problems}
Several generalizations  of the notion of well-posedness to vector functions have been proposed. We refer to \cite{mmr} for a survey on  the topic and a study of the relations among different well-posednsess concepts. In that paper vector well-posedness notions have been divided in two classes: pointiwise and global notions. Notions in the first  class  define the well-posedness of a vector problem with respect to a fixed  efficient solution, while in the global notions the set of efficient solutions or weakly efficient solutions is considered as a whole.   \ \\
In this paper we focus on the notion  of well-posedness due to Dentcheva and Helbig \cite{dh} (DH-well-posedness) which is a pointwise notion according to \cite{mmr}. 

\begin{definition}
Let $f:X\to \RR$. Problem (\ref{vp}) is said to be DH-well-posed (DH-wp for short) at $\bar x \in {\rm Eff}\  (X, f)$   if 

$$
\inf_{\alpha > 0} \diam L_f^C(f(\bar{x}) + \alpha c)=0, \ \ \forall c \in C,
$$
where $ L_f^C(f(\bar{x}) + \alpha c) = \{x \in X: f(x) \in  f(\bar{x}) + \alpha c- C\}$. 
\end{definition}

In \cite{mmr} it  has been proven that DH- well-posedness is the strongest among the pointwise well-posedness notions, that is if problem (\ref{vp}) is DH-wp at $\bar x \in X$ then it is well-posed at $\bar x$ according to the other pointwise well-posedness notions known in the literature. The  next result gives a useful characterization of DH--well-posedness. 

\begin{theorem}\cite{gmmn, mmr}\label{wp-oriented-distance}
Problem (\ref{vp}) is DH-well-posed at $\bar x \in \Eff(X, f)$ if and only if problem (\ref{sp}) is T-wp. 
\end{theorem}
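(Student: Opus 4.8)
The plan is to run everything through the scalarization $\varphi(x):=D_{-C}(f(x)-f(\bar x))$, so that problem (\ref{sp}) is precisely $\min_{x\in X}\varphi(x)$. First I would collect the elementary ingredients. Since $\bar x\in\Eff(X,f)\subseteq\WEff(X,f)$, Theorem \ref{min-car}(1) tells us that $\bar x$ solves (\ref{sp}), and $\varphi(\bar x)=D_{-C}(0)=0$ because $0\in\partial(-C)$ (the cone $C$ being pointed with nonempty interior) and property 3 of Proposition \ref{properties} applies; hence $\min_X\varphi=0$ and $\varphi\ge0$ on $X$. Next I would fix an auxiliary direction $c_0\in\rint C$ together with $\rho>0$ such that $c_0+\rho B\subseteq C$, and note that $\langle\xi,c_0\rangle-\rho\norm{\xi}=\inf_{\norm{u}\le1}\langle\xi,c_0+\rho u\rangle\ge0$ for every $\xi\in C^*$, so that $\rho\le\langle\xi,c_0\rangle\le\norm{c_0}$ for all $\xi\in C^*\cap\partial B$. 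Finally I would record that $L_f^C(f(\bar x)+\alpha c)$ is nondecreasing in $\alpha>0$ for each fixed $c\in C$ (because $\alpha c-C\subseteq\alpha'c-C$ when $\alpha\le\alpha'$, using $C+C\subseteq C$), so that $\inf_{\alpha>0}\diam L_f^C(f(\bar x)+\alpha c)=\lim_{\alpha\to0^+}\diam L_f^C(f(\bar x)+\alpha c)$.

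The crux is the two-sided inclusion
\[
\{x\in X:\varphi(x)\le\alpha\rho\}\ \subseteq\ L_f^C(f(\bar x)+\alpha c_0)\ \subseteq\ \{x\in X:\varphi(x)\le\alpha\norm{c_0}\}\qquad(\alpha>0).
\]
To obtain it, note that by property 4 of Proposition \ref{properties} (the cone $-C$ being closed), $x\in L_f^C(f(\bar x)+\alpha c_0)$ if and only if $D_{-C}(f(x)-f(\bar x)-\alpha c_0)\le0$; and by property 8, $D_{-C}(f(x)-f(\bar x)-\alpha c_0)=\sup_{\xi\in C^*\cap\partial B}(\langle\xi,f(x)-f(\bar x)\rangle-\alpha\langle\xi,c_0\rangle)$. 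Since $\varphi(x)=\sup_{\xi\in C^*\cap\partial B}\langle\xi,f(x)-f(\bar x)\rangle$ and $\rho\le\langle\xi,c_0\rangle\le\norm{c_0}$ on $C^*\cap\partial B$, this quantity lies between $\varphi(x)-\alpha\norm{c_0}$ and $\varphi(x)-\alpha\rho$, and the two inclusions follow. The right-hand inclusion in fact holds with any $c\in C$ in place of $c_0$ (it is just $1$-Lipschitzness of $D_{-C}$), whereas the left-hand one genuinely exploits $c_0\in\rint C$.

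Both implications then follow quickly. If (\ref{sp}) is T-wp, its unique minimizer must be $\bar x$. Given $c\in C$, the direction $c':=c+c_0\in\rint C$ satisfies $L_f^C(f(\bar x)+\alpha c)\subseteq L_f^C(f(\bar x)+\alpha c')\subseteq\{x:\varphi(x)\le\alpha\norm{c'}\}$, the last step being the right-hand inclusion above with $c'$ in place of $c_0$. If $\diam L_f^C(f(\bar x)+\alpha c)$ did not tend to $0$, then by monotonicity I could choose $\alpha_n\downarrow0$ and $x_n,y_n\in L_f^C(f(\bar x)+\alpha_n c)$ with $\norm{x_n-y_n}$ bounded away from $0$; but $\varphi(x_n),\varphi(y_n)\le\alpha_n\norm{c'}\to0=\min_X\varphi$, so $(x_n)$ and $(y_n)$ are minimizing sequences for $\varphi$ and hence both converge to $\bar x$ by T-wp, a contradiction. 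Thus $\inf_{\alpha>0}\diam L_f^C(f(\bar x)+\alpha c)=0$ for every $c\in C$, i.e.\ (\ref{vp}) is DH-wp at $\bar x$. Conversely, assume (\ref{vp}) is DH-wp at $\bar x$. The point $\bar x$ minimizes $\varphi$; if $\varphi(x^*)=0=\min_X\varphi$, then $D_{-C}(f(x^*)-f(\bar x))=0$ forces $f(x^*)-f(\bar x)\in-C$ by property 4, and efficiency of $\bar x$ then gives $f(x^*)=f(\bar x)$, so that $x^*$ and $\bar x$ both lie in $L_f^C(f(\bar x)+\alpha c_0)$ for every $\alpha>0$; letting $\alpha\to0^+$ and using DH-wp for $c_0$ together with monotonicity yields $x^*=\bar x$, hence the minimizer is unique. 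Finally, for a minimizing sequence $\varphi(x_n)\to0$, put $\alpha_n:=\varphi(x_n)/\rho+1/n\to0^+$; by the left-hand inclusion $x_n\in L_f^C(f(\bar x)+\alpha_n c_0)$, which also contains $\bar x$, so $\norm{x_n-\bar x}\le\diam L_f^C(f(\bar x)+\alpha_n c_0)\to0$ by DH-wp for $c_0$ and monotonicity. Hence (\ref{sp}) is T-wp.

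The step I expect to be the real obstacle is the left-hand inclusion $\{x:\varphi(x)\le\alpha\rho\}\subseteq L_f^C(f(\bar x)+\alpha c_0)$: it is exactly here that the nonemptiness of $\rint C$ is used, via the quantitative estimate $\langle\xi,c_0\rangle\ge\rho\norm{\xi}$ on $C^*$ — equivalently, the fact that displacing by $-\alpha c_0$ in an interior direction lowers $D_{-C}$ by at least $\alpha\rho$, a quantitative strengthening of the monotonicity in property 7. The rest is bookkeeping: reducing an arbitrary $c\in C$ in the definition of DH-well-posedness to the interior direction $c+c_0$, and invoking efficiency of $\bar x$ (not merely weak efficiency, which would only place $f(x^*)-f(\bar x)$ on $\partial(-C)$) to get uniqueness of the scalar minimizer.
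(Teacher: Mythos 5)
Your argument is correct, but note that the paper itself does not prove Theorem \ref{wp-oriented-distance}: it is quoted without proof from the cited references \cite{gmmn, mmr}, so there is no internal proof to compare against. What you supply is a legitimate self-contained derivation, and its engine is the two-sided sandwich
$\{x:\varphi(x)\le\alpha\rho\}\subseteq L_f^C(f(\bar x)+\alpha c_0)\subseteq\{x:\varphi(x)\le\alpha\norm{c_0}\}$
for $\varphi(x)=D_{-C}(f(x)-f(\bar x))$, obtained from the dual representation in Proposition \ref{properties}(8) together with the uniform bounds $\rho\le\langle\xi,c_0\rangle\le\norm{c_0}$ on $C^*\cap\partial B$ coming from $c_0+\rho B\subseteq C$. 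This correctly identifies where $\rint C\ne\emptyset$ enters (the lower bound $\langle\xi,c_0\rangle\ge\rho$, i.e.\ the quantitative version of Proposition \ref{properties}(7)), and the reduction of an arbitrary $c\in C$ to the interior direction $c+c_0$ via monotonicity of the level sets is the right way to handle the ``for all $c\in C$'' quantifier in the Dentcheva--Helbig definition. The remaining pieces are all in order: $\varphi\ge0$ with $\varphi(\bar x)=0$ by Theorem \ref{min-car}(1) and pointedness of $C$; uniqueness of the scalar minimizer from efficiency (not merely weak efficiency) of $\bar x$ plus the $c=c_0$ level-set condition; and the conversion of minimizing sequences into points of small level sets via $\alpha_n=\varphi(x_n)/\rho+1/n$. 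This matches in spirit the scalarization route used in \cite{mmr, gmmn}, so the statement can safely be used as quoted.
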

The following theorem  (see \cite{mmr}) gives a generalization of Theorem \ref{th-wp-convex}. 

\begin{theorem}\label{th-conv-vector-finite}
Let $X$ and $Y$ be finite-dimensional. Assume  $f: X \to Y$ is a $C$-convex function,  $\bar x \in \Eff(X, f)$ and $f^{-1}(f(\bar x))=\{\bar x\}$. Then problem (\ref{vp}) is DH-wp at $\bar x$. 
\end{theorem}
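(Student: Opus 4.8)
The plan is to pass to the scalarized problem and invoke the finite-dimensional convex well-posedness result. By Theorem~\ref{wp-oriented-distance}, problem~(\ref{vp}) is DH-wp at $\bar x$ if and only if the scalar problem (\ref{sp}), that is the minimization of $\varphi(x) := D_{-C}(f(x)-f(\bar x))$ over $X$, is T-wp. Since $C$ is a pointed cone with nonempty interior, $-C$ is nonempty and $-C\neq Y$, so by Proposition~\ref{properties}.1 the function $\varphi$ is real-valued; moreover, since $f$ is $C$-convex, the theorem quoted just before problem (\ref{sp}) gives that $\varphi$ is convex. As $X$ is finite-dimensional, Theorem~\ref{th-wp-convex} reduces everything to showing that $\varphi$ admits a \emph{unique} minimizer.

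First I would identify the minimum value and a minimizer. Because $C$ is pointed with nonempty interior, $0$ cannot lie in $\rint(-C)$ (otherwise the cone $-C$ would be all of $Y$), so $0\in\partial(-C)$ and Proposition~\ref{properties}.3 yields $D_{-C}(0)=0$. Since $\bar x\in\Eff(X,f)\subseteq\WEff(X,f)$, Theorem~\ref{min-car}.1 tells us that $\bar x$ solves (\ref{sp}); hence $\inf_X\varphi=\varphi(\bar x)=D_{-C}(0)=0$. (Equivalently one could note directly that $\bar x\in\WEff(X,f)$ forces $f(x)-f(\bar x)\notin\rint(-C)$ for all $x$, so $\varphi\ge 0$ by Proposition~\ref{properties}.3, with equality at $\bar x$.)

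For uniqueness, suppose $x'\in X$ is another solution of (\ref{sp}), so $\varphi(x')=0\le 0$. Since $-C$ is closed, Proposition~\ref{properties}.4 applied to $A=-C$ gives $f(x')-f(\bar x)\in -C$. But $\bar x\in\Eff(X,f)$ means $(f(X)-f(\bar x))\cap(-C)=\{0\}$, so $f(x')-f(\bar x)=0$, i.e. $f(x')=f(\bar x)$; the hypothesis $f^{-1}(f(\bar x))=\{\bar x\}$ then forces $x'=\bar x$. Thus $\bar x$ is the unique minimizer of the convex function $\varphi$ on the finite-dimensional space $X$, Theorem~\ref{th-wp-convex} gives that (\ref{sp}) is T-wp, and Theorem~\ref{wp-oriented-distance} concludes that (\ref{vp}) is DH-wp at $\bar x$.

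The argument is essentially a chaining of the preceding lemmas, so there is no single hard obstacle; the only points that need care are verifying that $\inf_X\varphi=0$ and that the solution set of the scalarization is exactly the set $\{x: f(x)-f(\bar x)\in -C\}$, both of which rest on properties~3 and~4 of the oriented distance together with the pointedness of $C$. The substantive content (convexity of the scalarization, the characterizations of efficient/weakly efficient points, and finite-dimensional convex well-posedness) has been front-loaded into the results already established.
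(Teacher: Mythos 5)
Your proof is correct. Note that the paper itself states this theorem without proof (it is quoted from \cite{mmr}), but your argument is exactly the intended one and uses precisely the machinery the paper assembles for this purpose: scalarize via Theorem~\ref{wp-oriented-distance}, observe that $D_{-C}(f(\cdot)-f(\bar x))$ is convex by the theorem preceding problem (\ref{sp}), verify via Proposition~\ref{properties} and the efficiency/injectivity hypotheses that $\bar x$ is its unique minimizer, and conclude with Theorem~\ref{th-wp-convex}; this is also the route followed in \cite{mmr}.
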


DH-well-posedness imposes some restrictions on the set ${\rm Eff}\, (X, f )$. Indeed, if problem (\ref{vp}) is DH-well-posed at $\bar x\in {\rm Eff}\, (X, f )$ then $\bar x \in {\rm StEff}\, (X, f )$. This property is typical of the vector case and shows that most of the vector well-posedness notions require implicitly stronger properties than the simple good behavior of minimizing sequence

\begin{theorem}\label{steff}\cite {mmr}
If $F: X \to Y$ is continuous and problem (\ref{vp}) is DH-wp at $\bar x\in  {\rm Eff}\, (X, f )$, then $\bar x \in {\rm StMin}\, (X, f )$.
\end{theorem}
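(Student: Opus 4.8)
The plan is to argue directly, extracting from DH-well-posedness a uniform control on the decision variable near $\bar x$ and then transporting it to the image space by continuity. Fix $\varepsilon>0$; the goal is to produce $\delta>0$ with $(f(X)-f(\bar x))\cap(\delta B-C)\subseteq\varepsilon B$. First I would use continuity of $f$ at $\bar x$ to choose $\eta>0$ such that $\|x-\bar x\|<\eta$ forces $\|f(x)-f(\bar x)\|<\varepsilon$.

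Next I would invoke DH-well-posedness along a fixed interior direction. Since $\rint C\neq\emptyset$, pick $c_0\in\rint C$ and $\rho>0$ with $c_0+\rho B\subseteq C$; because $C$ is a cone this scales to $\alpha c_0+\alpha\rho B\subseteq C$ for every $\alpha>0$. As $\inf_{\alpha>0}\diam L_f^C(f(\bar x)+\alpha c_0)=0$, I can fix $\alpha_0>0$ with $\diam L_f^C(f(\bar x)+\alpha_0 c_0)<\eta$; since $\alpha_0 c_0\in C$ we have $\bar x\in L_f^C(f(\bar x)+\alpha_0 c_0)$, so every $x$ lying in this level set satisfies $\|x-\bar x\|<\eta$.

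Finally I would set $\delta:=\alpha_0\rho$ and verify the inclusion: if $y\in(f(X)-f(\bar x))\cap(\delta B-C)$, write $y=f(x)-f(\bar x)=\delta b-c$ with $b\in B$, $c\in C$; then
$$
f(\bar x)+\alpha_0 c_0-f(x)=(\alpha_0 c_0-\delta b)+c\in C+C\subseteq C,
$$
because $\alpha_0 c_0-\delta b\in\alpha_0 c_0+\alpha_0\rho B\subseteq C$ and $C$ is a convex cone, whence $x\in L_f^C(f(\bar x)+\alpha_0 c_0)$, so $\|x-\bar x\|<\eta$ and therefore $y=f(x)-f(\bar x)\in\varepsilon B$. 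The one delicate point is precisely this last step — rewriting the ``$\delta B-C$'' condition as membership in a DH level set, i.e.\ absorbing the $\delta$-ball perturbation into the cone — which is exactly why one works along a fixed interior direction $c_0$ and calibrates $\delta$ to the interior radius $\rho$; everything else is routine. An alternative would be to route through Theorem~\ref{wp-oriented-distance}: DH-well-posedness at $\bar x$ is equivalent to T-well-posedness of problem~(\ref{sp}), so any sequence $(x_n)$ with $D_{-C}(f(x_n)-f(\bar x))\to 0$ converges to $\bar x$, and feeding in a sequence that would violate strict efficiency, together with continuity of $f$, yields a contradiction; I would nonetheless favor the direct argument above as the shorter path, and it delivers $\bar x\in{\rm StEff}\,(X,f)$.
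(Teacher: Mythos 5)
The paper states this result without proof, quoting it from \cite{mmr}, so there is no in-text argument to compare against; judged on its own, your direct proof is correct and complete. The one step you flag as delicate is indeed the crux, and you handle it properly: choosing $c_0\in\rint C$ with $c_0+\rho B\subseteq C$, using positive homogeneity of the cone to get $\alpha_0 c_0+\alpha_0\rho B\subseteq C$, and setting $\delta=\alpha_0\rho$ lets you absorb the $\delta b$ perturbation, so that $f(\bar x)+\alpha_0c_0-f(x)=(\alpha_0c_0-\delta b)+c\in C$ and hence $x\in L_f^C(f(\bar x)+\alpha_0c_0)$; since $\alpha_0c_0\in C$ puts $\bar x$ in that level set, the diameter bound $<\eta$ gives $\|x-\bar x\|<\eta$ and continuity finishes. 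Two small observations: you only need continuity of $f$ at $\bar x$ (the theorem assumes global continuity, so this is a mild strengthening, not a gap), and you only invoke the DH condition for the single direction $c=c_0$, which is legitimate since the definition quantifies over all $c\in C$. The alternative route you sketch through Theorem~\ref{wp-oriented-distance} also works (negate strict efficiency to build a sequence $x_n$ with $D_{-C}(f(x_n)-f(\bar x))\le 1/n$ yet $\|f(x_n)-f(\bar x)\|>\varepsilon$, then T-well-posedness of $(X,D_{-C})$ forces $x_n\to\bar x$, contradicting continuity), but your direct argument is cleaner and, unlike the scalarization route, does not pass through the constants relating $D_{-C}$ to the base $G$. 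Note also that the conclusion in the paper's statement should read $\bar x\in{\rm StEff}\,(X,f)$, which is what you prove.
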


\section{Density of DH-well-posed functions}\label{density}

The first result in this section shows that if the set of functions 
$${ \mathcal H}=\{f: X \to Y: \Eff(X, f) \not = \emptyset\}$$ 
is endowed with the topology of uniform convergence on bounded sets, then the set of functions $g \in {\mathcal H}$ enjoying DH-wp properties is dense in ${\mathcal H}$.
 
\begin{theorem}\label{density-nonempty}
Let $f\in { \mathcal H}$. Then, for every $\bar x \in \Eff(X, f)$, there exists a sequence of functions $f_n: X \to Y$ such that $f_n \to f$ in the uniform convergence on bounded sets, $ \bar x \in \Eff(X, f_n)$ for every $n$ and problem  $(X, f_n)$ is DH-wp at $\bar x$. Further, if $f$ is continuous then  $\bar x \in {\rm StEff}\, (X, f_n)$ for every $n$.
\end{theorem}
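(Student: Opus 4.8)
The plan is to perturb $f$ by a small scalar coercive term pointing along a fixed interior direction of the cone, and then to pull Tykhonov well-posedness of the associated oriented-distance problem back to DH-well-posedness through Theorem~\ref{wp-oriented-distance}. Concretely, I would fix $\hat c\in\rint C$ and set
\[
f_n(x)=f(x)+\tfrac1n\norm{x-\bar x}\,\hat c ,\qquad n\in\NN .
\]
Convergence $f_n\to f$ uniformly on bounded sets is immediate, since $\norm{f_n(x)-f(x)}=\tfrac1n\norm{x-\bar x}\norm{\hat c}$. To check $\bar x\in\Eff(X,f_n)$: if $f_n(x)-f_n(\bar x)=f(x)-f(\bar x)+\tfrac1n\norm{x-\bar x}\hat c\in-C$, then subtracting $\tfrac1n\norm{x-\bar x}\hat c\in C$ and using that $-C$ is a convex cone gives $f(x)-f(\bar x)\in-C$, hence $f(x)=f(\bar x)$ by efficiency of $\bar x$ for $f$; the leftover $\tfrac1n\norm{x-\bar x}\hat c$, being a nonnegative multiple of an interior point of $C$, lies in $-C$ only if $\norm{x-\bar x}=0$, i.e. $x=\bar x$.

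The crux is a quantitative monotonicity inequality for $D_{-C}$: with $\rho:=-D_{-C}(-\hat c)$ one has
\[
D_{-C}(y+t\hat c)\ \ge\ D_{-C}(y)+t\rho\qquad\text{for all }y\in Y,\ t\ge 0 .
\]
Indeed $D_{-C}$ is convex and positively homogeneous by Proposition~\ref{properties}(5)--(6), hence subadditive, so $D_{-C}(y)\le D_{-C}(y+t\hat c)+D_{-C}(-t\hat c)=D_{-C}(y+t\hat c)-t\rho$; moreover $\rho>0$ because $-\hat c\in\rint(-C)$ forces $D_{-C}(-\hat c)<0$ by Proposition~\ref{properties}(3). (One may equally read the inequality off the dual representation in Proposition~\ref{properties}(8), using that $\langle\xi,\hat c\rangle\ge\rho$ for $\xi\in C^*\cap\partial B$.)

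Applying this with $y=f(x)-f(\bar x)$ and $t=\tfrac1n\norm{x-\bar x}$, and writing $\phi(x):=D_{-C}(f(x)-f(\bar x))$, one gets
\[
D_{-C}\big(f_n(x)-f_n(\bar x)\big)\ \ge\ \phi(x)+\tfrac\rho n\norm{x-\bar x}=:g_n(x),
\]
with equality at $x=\bar x$ (both sides equal $D_{-C}(0)=0$). Since $\bar x\in\WEff(X,f)$, Theorem~\ref{min-car} gives that $\phi$ attains its minimum value $0$ at $\bar x$, so the quoted result on perturbations of the form $h(x)+a\norm{x-\bar x}$ with $a>0$ shows $g_n$ is T-wp with $\inf g_n=0$. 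On the other hand $x\mapsto D_{-C}(f_n(x)-f_n(\bar x))$ also has infimum $0$, attained at $\bar x$, by Theorem~\ref{min-car} applied to $f_n$ (using $\bar x\in\Eff(X,f_n)\subseteq\WEff(X,f_n)$). As this function dominates $g_n\ge0$, a sandwich argument forces each of its minimizing sequences to be a minimizing sequence of $g_n$, hence to converge to $\bar x$, and forces its minimizer to be unique; thus problem $(X,D_{-C})$ associated with $f_n$ is T-wp, and Theorem~\ref{wp-oriented-distance} yields that $(X,f_n)$ is DH-wp at $\bar x$. If in addition $f$ is continuous, then each $f_n$ is continuous and Theorem~\ref{steff} gives $\bar x\in{\rm StEff}(X,f_n)$. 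The only genuinely delicate step is the estimate of the second paragraph: $1$-Lipschitzness of $D_{-C}$ alone would give only $D_{-C}(y+t\hat c)\ge D_{-C}(y)-t\norm{\hat c}$, which is useless here, so one must exploit convexity/homogeneity of $D_{-C}$ together with $\hat c\in\rint C$; the remainder is bookkeeping with Theorems~\ref{min-car} and~\ref{wp-oriented-distance}.
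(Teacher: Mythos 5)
Your proof is correct and takes essentially the same route as the paper: the same perturbation $f_n=f+\tfrac1n\norm{\cdot-\bar x}\,\hat c$ with $\hat c\in\rint C$, the same verification that efficiency at $\bar x$ is preserved, and the same reduction via Theorem~\ref{wp-oriented-distance} to showing that a coercive lower bound of the form $D_{-C}(f_n(x)-f_n(\bar x))\ge \tfrac{c}{n}\norm{x-\bar x}$ forces minimizing sequences of the scalarized problem to converge to $\bar x$. The only difference is in the key estimate: you obtain it from subadditivity and positive homogeneity of $D_{-C}$ together with $D_{-C}(-\hat c)<0$, while the paper invokes a constant $\alpha>0$ comparing $D_{-C}$ with the support function over the weak$^*$-compact base $G$ of $C^*$; both yield the same conclusion.
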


\begin{proof}
Let $k^0 \in {\rm int}\, C$ be fixed and consider the sequence of functions 
\begin{equation*}
f_n(x)=f(x)+ \frac1n \|x- \bar x\|k^0
\end{equation*}
Since $\bar x \in \Eff(X, f)$, it holds
\begin{equation}\label{eff}
f(x)-f(\bar x)\not \in -C, \ \forall x \in X, \ x \not = \bar x
\end{equation}
Hence
\begin{equation*}
f(x)-f_n(\bar x)=f(x)-f(\bar x)+ \frac1n \|x-\bar x\|k^0\not \in -C, \ \forall x \in X, \ x \not =\bar x
\end{equation*}
since (\ref{eff}) holds. Hence  $\bar x \in \Eff(X, f_n) \ \forall n$. Since $\Eff(X, f_n)\subseteq \WEff(X, f_n)$, Theorem \ref{min-car} implies $D_{-C}(f_n(x)-f_n(\bar x)) \ge 0$ for every $ x \in X$.  Now we prove problem  $(X, f_n)$ is DH-wp at $\bar x$ for every $n$. From Theorem \ref{wp-oriented-distance} we know that problem $(X, f)$ is DH-wp at $\bar x \in X$ if and only if the scalar problem (\ref{sp}) is T-wp at $\bar x$. Since ${\rm int}\, C \not = \emptyset$, $C^*$ has a closed convex ${\rm weak}^*$-compact base  
\begin{equation}
G=\{\xi \in C^* : \langle \xi , k^0\rangle=1 \}
\end{equation}
(see e.g. \cite{jameson}). 
According to \cite{mmr} there exists a constant $\alpha >0$ such that 
\begin{equation*}
D_{-C}(f_n(x)-f(\bar x))\ge \alpha  \max_{\xi \in G}\langle  \xi, f_n(x)-f_n(\bar x)\rangle 
\end{equation*}
\begin{equation*}
=\alpha \max _{\xi \in G}\langle \xi , f(x)-f(\bar x)+ \frac1n \|x -\bar x\|k^0 \rangle 
\end{equation*}
\begin{equation*}
=\alpha \max_{\xi \in G}\langle \xi , f(x)-f(\bar x)\rangle+ \frac1n \|x-\bar x\|
\end{equation*}
For a fixed $n$, let $x_k$ be a minimizing sequence for $D_{-C}(f_n(x)-f_n(\bar x))$, that is $D_{-C}(f_n(x_k)-f_n(\bar x))\to 0$. If $x_k \not \to \bar x$ we get 
\begin{equation*}
D_{-C}(f_n(x_k) - f_n(\bar x)) \ge \alpha \max_{\xi \in G}\langle \xi , f(x_k)-f(\bar x)\rangle+ \frac1n \|x_k-\bar x\|
\end{equation*}
\begin{equation*}
\ge \inf_{k \in \NN} \frac 1n \|x_k-\bar x\|>0
\end{equation*}
which contradicts to $x_k$ minimizing sequence for $D_{-C}(f_n(x)-f_n(\bar x))$ (the last inequality follows since $\bar x \in \Eff(X, f)$ implies $\max_{\xi \in G}\langle \xi , f(x_k)-f(\bar x)\rangle \ge 0 \ \ \forall x \in X$) . Hence $x_k \to \bar x$ and problem  $(X, f_n)$ is DH-wp at $\bar x$. Finally, we get the desired result  observing that $f_n \to f$ in the uniform convergence on bounded sets. If $f$ is continuous, then apply Theorem \ref{steff} to conclude the proof. 
\end{proof}

To prove the second density result in this section  we need the following definition and the next lemma.  

\begin{definition} \label{bounded}\cite{gmmn}
We say that $f: X \to Y$ is $C$-bounded from below by $\xi \in C^* \backslash \{0\}$ when $\inf_{x \in X} \langle \xi , f(x)\rangle> - \infty$. 
\end{definition}

Let $\bar x \in \Eff(X, f)$, consider function
$$
h_{\bar \xi}(x) = \langle \bar \xi , f(x)\rangle 
$$ 
and the associated scalar minimization problem 
\begin{equation*}\label{lsp}\tag{$X, h_{\bar \xi}$}
\min h_{\bar \xi}(x) \ , \ x \in X
\end{equation*}

\begin{lemma}\label{wp-scalar}
Let $\bar x \in \Eff(X, f)$ and  $\bar \xi \in C^*\backslash \{0\}$. If problem (\ref{lsp})  is T-wp at $\bar x$, then problem (\ref{vp}) is DH-wp at $\bar x$. 
\end{lemma}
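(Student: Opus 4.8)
The plan is to show that T-wellposedness of the scalar problem $(X,h_{\bar\xi})$ forces the minimizing sequences of the scalarized problem $(X,D_{-C})$ associated to $\bar x$ to converge, and then invoke Theorem \ref{wp-oriented-distance} to conclude DH-wellposedness of $(X,f)$ at $\bar x$. So the first step is to recall, via Theorem \ref{min-car} and the hypothesis $\bar x\in\Eff(X,f)\subseteq\WEff(X,f)$, that $\bar x$ solves problem (\ref{sp}), i.e. $D_{-C}(f(x)-f(\bar x))\ge 0$ for all $x\in X$, so the infimum of the scalarized objective is $0$, attained at $\bar x$. Likewise, since $\bar\xi\in C^*\backslash\{0\}$, property 7 of Proposition \ref{properties} gives monotonicity of $D_{-C}$, hence $h_{\bar\xi}(x)-h_{\bar\xi}(\bar x)=\langle\bar\xi,f(x)-f(\bar x)\rangle\ge 0$ as well (one can normalize $\bar\xi$ so that $\bar\xi\in G$, the weak$^*$-compact base of $C^*$), confirming that $\bar x$ also minimizes $h_{\bar\xi}$ with value $\inf_X h_{\bar\xi}=h_{\bar\xi}(\bar x)$.

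The key estimate is the same two-sided bound used in the proof of Theorem \ref{density-nonempty}: by the argument of \cite{mmr}, there is a constant $\alpha>0$ with
\begin{equation*}
D_{-C}(f(x)-f(\bar x))\ge \alpha\,\max_{\xi\in G}\langle\xi,f(x)-f(\bar x)\rangle\ge \alpha\,\langle\bar\xi,f(x)-f(\bar x)\rangle=\alpha\bigl(h_{\bar\xi}(x)-h_{\bar\xi}(\bar x)\bigr),
\end{equation*}
after normalizing $\bar\xi$ into $G$. Now take any minimizing sequence $x_k$ for problem (\ref{sp}), so $D_{-C}(f(x_k)-f(\bar x))\to 0$. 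The displayed inequality forces $h_{\bar\xi}(x_k)-h_{\bar\xi}(\bar x)\to 0$, i.e. $x_k$ is a minimizing sequence for problem (\ref{lsp}). By the assumed T-wellposedness of (\ref{lsp}) at $\bar x$, we get $x_k\to\bar x$. Since this holds for every minimizing sequence of (\ref{sp}), and $\bar x$ is the (unique, by this very convergence) minimizer of the scalarized objective, problem (\ref{sp}) is T-wellposed at $\bar x$; Theorem \ref{wp-oriented-distance} then yields that problem (\ref{vp}) is DH-wellposed at $\bar x$.

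The only delicate point is the normalization of $\bar\xi$ and the citation of the constant $\alpha$: one needs $\langle\bar\xi,k^0\rangle>0$ for some $k^0\in\rint C$ in order to rescale $\bar\xi$ into the base $G=\{\xi\in C^*:\langle\xi,k^0\rangle=1\}$, which holds because $\bar\xi\ne 0$ and $k^0\in\rint C$ (otherwise $\bar\xi$ would vanish on $\rint C$, hence on its closure $C$, forcing $\bar\xi=0$). Once $\bar\xi\in G$, the chain of inequalities from \cite{mmr} applies verbatim, and since rescaling $\bar\xi$ by a positive constant changes neither T-wellposedness of (\ref{lsp}) nor its set of minimizing sequences, there is no loss of generality. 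Everything else is a direct transcription of the estimate already exploited in Theorem \ref{density-nonempty}.
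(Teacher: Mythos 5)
Your proposal is correct and follows essentially the same route as the paper: both reduce DH-well-posedness to T-well-posedness of the scalarized problem $(X,D_{-C})$ via Theorem \ref{wp-oriented-distance}, and both use the pointwise bound $D_{-C}(f(x)-f(\bar x))\ge \mathrm{const}\cdot\langle\bar\xi, f(x)-f(\bar x)\rangle$ (the paper takes $\bar\xi\in C^*\cap\partial B$ and invokes property 8 of Proposition \ref{properties} directly, with no constant $\alpha$; you route through the base $G$, and you argue directly on minimizing sequences rather than by contradiction --- immaterial differences). One justification in your write-up is misattributed: the inequality $h_{\bar\xi}(x)-h_{\bar\xi}(\bar x)=\langle\bar\xi,f(x)-f(\bar x)\rangle\ge 0$ does \emph{not} follow from the monotonicity of $D_{-C}$ in property 7, nor from $\bar x\in\Eff(X,f)$ alone (efficiency of $\bar x$ only gives $f(x)-f(\bar x)\notin -C$, which is compatible with $\langle\bar\xi,f(x)-f(\bar x)\rangle<0$ for a fixed $\bar\xi$). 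The inequality is true here simply because T-well-posedness of $(X,h_{\bar\xi})$ at $\bar x$ already asserts that $\bar x$ is the (unique) global minimizer of $h_{\bar\xi}$ --- which is exactly the reason the paper gives --- so the sandwich $0\le \alpha\bigl(h_{\bar\xi}(x_k)-h_{\bar\xi}(\bar x)\bigr)\le D_{-C}(f(x_k)-f(\bar x))\to 0$ stands and the rest of your argument goes through unchanged.
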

\begin{proof}
Without loss of generality let  $\bar \xi \in C^*\cap \partial B$.  Assume problem  (\ref{vp})  is not DH-wp at $\bar x$. Since $\bar x \in \Eff(X, f)\subseteq {\rm WEff}(X, f)$, by Theorem \ref{min-car} it holds $D_{-C}(f(x)-f(\bar x))\ge 0$, for every $x \in X$ and by Theorem \ref{wp-oriented-distance} problem (\ref{sp}) is not T-wp. Then there exists a sequence $x_n \in X$ such that $D_{-C}(f(x_n)-f(\bar x))\to 0$ but $x_n \not \to \bar x$. Since, by Proposition \ref{properties}
\begin{equation*}
D_{-C}(f(x_n)-f(\bar x))=\max_{\xi \in C^* \cap \partial B}\langle \xi , f(x_n)-f(\bar x)\rangle 
\end{equation*}
it follows 
\begin{equation*}
 D_{-C}(f(x_n)-f(\bar x)) \ge \langle \bar \xi , f(x_n)-f( \bar x)\rangle = h_{\bar \xi}(x_n) -h_{\bar \xi}(\bar x)
\end{equation*}
Since problem (\ref{lsp}) is T-wp at $\bar x$, it follows that $\bar x$ is a minimum point for $h_{\bar \xi}$ and hence  $ h_{\bar \xi}(x_n) -h_{\bar \xi}(\bar x)=  \langle \bar \xi , f(x)-f(\bar x)\rangle\ge 0\ \forall n$.  From $D_{-C}(f(x_n)-f(\bar x))\to 0$ it follows  $h_{\bar \xi}(x_n)\to h_{\bar \xi}(\bar x)$ which contradicts problem (\ref{lsp}) is T-wp  since $x_n \not \to \bar x$. 
\end{proof}

In the  next result we drop the asumption ${\rm Eff}\, (x, f) \not= \emptyset$ and we show that if the  set of functions 
\begin{equation*}
{\mathcal H'}=\{f: X \to Y: \exists \ \xi \in C^*   \  {\rm such\  that}\ f \ {\rm is}\  C-{\rm bounded \ from\  below\  by}\  \xi \}
\end{equation*}
 is endowed with the topology of uniform convergence on bounded sets, then the set of functions $g \in {\mathcal H'}$ enjoying DH-wp properties is dense in ${\mathcal H'}$. 
We endow ${\mathcal H'}$ with a distance  compatible with the uniform convergenge on bounded sets (see e.g. \cite{lucchetti2}). Fix $\theta \in X$ and set for any two functions $f,g \in {\mathcal H'}$ and $i \in \NN$,
\begin{equation*}
\norm{f-g}_{i} = \sup_{\norm{x - \theta} \leq i} \norm{f(x) - g(x)}.
\end{equation*}
If $\norm{f-g}_{i} = \infty$ for some $i$, then set $d(f,g) = 1$, otherwise
\begin{equation}\label{distance}
d(f,g) = \sum_{i=1}^{\infty}2^{-i}\frac{\norm{f-g}_{i}}{1+\norm{f-g}_{i}}.
\end{equation}

\begin{theorem}\label{density-C-bounded}
Assume there exists $\bar \xi \in C^*\backslash \{0\}$ such that $f:X\to Y$ is $C$-bounded from below by $\bar \xi$ and $\langle \bar \xi , f(x)\rangle$ is lsc with respect to $x \in X$. Then, there exists a sequence of functions $f_n : X \to Y$ uniformly converging to $f$ on the bounded sets, such that $\Eff(X, f_n) \not = \emptyset$ for every $n$ and problem $(X, f_n)$ is DH-wp at some $\bar  x_n\in \Eff(X, f_n)$.  
\end{theorem}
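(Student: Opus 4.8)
The plan is to scalarize along $\bar\xi$ and then perturb with a vanishing Ekeland term. Put $h(x):=\langle\bar\xi,f(x)\rangle$, which by hypothesis is lsc and bounded below on $X$, and fix $k^0\in\rint C$ with $\langle\bar\xi,k^0\rangle=1$ (possible since $\langle\bar\xi,c\rangle>0$ for every $c\in\rint C$ whenever $0\neq\bar\xi\in C^*$). The perturbations will have the form $f_n(x)=f(x)+\varepsilon_n\|\hat x_n-x\|k^0$ with $\varepsilon_n\to 0$ and $\hat x_n$ supplied by Ekeland's Variational Principle (Theorem \ref{evp}) applied to $h$. The normalization $\langle\bar\xi,k^0\rangle=1$ and the choice of coefficient $\varepsilon_n$ are made precisely so that $\langle\bar\xi,f_n(x)\rangle=h(x)+\varepsilon_n\|\hat x_n-x\|$, i.e. exactly the function that the third conclusion of Ekeland's principle asserts to be T-wp.

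In detail, for each $n$ I would pick $z_n$ with $h(z_n)<\inf_X h+n^{-3}$, set $a_n:=\max\{\|z_n-\theta\|,1\}$, $\varepsilon_n:=(na_n)^{-1}$ and $r_n:=a_n n^{-2}$, so that $r_n\varepsilon_n=n^{-3}$ and Ekeland applies with data $(\varepsilon_n,r_n,z_n)$. It yields $\hat x_n$ with $\|\hat x_n-z_n\|<r_n$ and with $g_n(x):=h(x)+\varepsilon_n\|\hat x_n-x\|$ T-wp; let $\bar x_n$ be its unique minimizer. Then $\langle\bar\xi,f_n(\cdot)\rangle=g_n$, so the scalar problem associated to $f_n$ via $\bar\xi$ is T-wp at $\bar x_n$. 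Next I would check $\bar x_n\in\Eff(X,f_n)$: if $f_n(x)-f_n(\bar x_n)\in -C$, pairing with $\bar\xi\in C^*$ gives $g_n(x)\leq g_n(\bar x_n)=\min g_n$, whence $x$ minimizes $g_n$ and, by uniqueness, $x=\bar x_n$; thus $(f_n(X)-f_n(\bar x_n))\cap(-C)=\{0\}$ and in particular $\Eff(X,f_n)\neq\emptyset$. Lemma \ref{wp-scalar}, applied with $\bar\xi$ and $\bar x_n$, then gives that $(X,f_n)$ is DH-wp at $\bar x_n$. Finally, from $\|f_n(x)-f(x)\|=\varepsilon_n\|\hat x_n-x\|\,\|k^0\|$, $\|\hat x_n-\theta\|\leq a_n+r_n\leq 2a_n$ and $\varepsilon_n a_n=n^{-1}$, one gets $\|f_n-f\|_i\leq\|k^0\|(2+i)n^{-1}\to 0$ for every fixed $i$, hence $d(f_n,f)\to 0$.

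The one genuinely delicate point is the uniform convergence on bounded sets. A naive application of Ekeland with a fixed radius $r$ forces $\hat x_n$ to lie wherever the $\varepsilon_n$-near-minimizers of $h$ are, and when $\inf_X h$ is not attained these may escape to infinity faster than $\varepsilon_n^{-1}$, destroying $f_n\to f$. The remedy — the only quantitative idea in the argument — is to decouple the two Ekeland parameters: choose the near-minimizer $z_n$ at the $n^{-3}$ level, which freezes $a_n=\|z_n-\theta\|$ independently of $\varepsilon_n$; then take $\varepsilon_n=(na_n)^{-1}$, small relative to $1/a_n$, and absorb the resulting loss into the larger admissible radius $r_n=a_n n^{-2}$, still legitimate since $r_n\varepsilon_n=n^{-3}$. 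Once this bookkeeping is arranged, the remaining steps (efficiency of $\bar x_n$, identification of $\langle\bar\xi,f_n(\cdot)\rangle$ with the Ekeland-perturbed functional, and the invocation of Lemma \ref{wp-scalar}) are routine.
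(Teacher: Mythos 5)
Your argument is correct, and its skeleton is the same as the paper's: scalarize along $\bar\xi$, apply Ekeland's Variational Principle to obtain a perturbation $\varepsilon\|\cdot-\hat x\|$ whose associated scalar problem is T-wp, lift it to the vector perturbation $\varepsilon\|\cdot-\hat x\|k^0$ with $\langle\bar\xi,k^0\rangle=1$, check efficiency of the minimizer by pairing with $\bar\xi$, and conclude DH-well-posedness via Lemma \ref{wp-scalar}. Where you genuinely diverge is on the point you yourself flag as delicate: keeping the Ekeland point from escaping to infinity so fast that uniform convergence on bounded sets fails. The paper handles this by first replacing $f$ with $g=f+\frac1j\|x-\theta\|k^0$; the coercive term makes the sublevel set $\{g_{\bar\xi}\le\inf g_{\bar\xi}+1\}$ bounded, so the Ekeland point lands in a fixed ball $B(\theta,M)$ and the bound $\|h-g\|_i\le\varepsilon\|k^0\|(i+M)$ follows with an $M$ independent of $\varepsilon$. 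You instead leave $f$ untouched and decouple the Ekeland parameters: you anchor a near-minimizer $z_n$ at level $n^{-3}$, set $a_n=\max\{\|z_n-\theta\|,1\}$, and scale $\varepsilon_n=(na_n)^{-1}$, $r_n=a_nn^{-2}$ so that $r_n\varepsilon_n=n^{-3}$, $\|\hat x_n-\theta\|\le 2a_n$, and $\|f_n-f\|_i\le\|k^0\|(2+i)/n$. Both devices are legitimate; yours uses a single perturbation term (the approximants are $f+\varepsilon_n\|\hat x_n-\cdot\|k^0$ rather than $f+\frac1j\|\cdot-\theta\|k^0+\varepsilon\|\cdot-\bar x\|k^0$) at the cost of letting the center $\hat x_n$ drift with $n$, while the paper's preliminary coercification keeps all the action inside a fixed ball and is the device it reuses verbatim in the genericity theorem of Section \ref{Genericity}. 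Your efficiency check ($f_n(x)-f_n(\bar x_n)\in-C$ forces $x=\bar x_n$ by uniqueness of the minimizer of $g_n$) is also slightly more direct than the paper's detour through $D_{-C}$ and Theorem \ref{min-car}.
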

\begin{proof}
Fix $\sigma >0$ and take $j$ so large that setting $g(x)=f(x)+\frac1j\|x-\theta\|k^0$ with $k^0 \in {\rm int}\, C$ such that $\langle \bar \xi , k^0\rangle=1$, it holds $d(f, g)<\frac{\sigma}{2}$. Now set  
\begin{equation}
g_{\bar \xi}(x)  = \langle \bar \xi , g(x)\rangle =  \langle \bar \xi , f(x)\rangle+ \frac1j \|x-\theta\| 
\end{equation}
and observe that $g_{\bar \xi}(x)$ is lower bounded  by Definition \ref{bounded}.  Hence, we can find $M>0$ such that 
\begin{equation*}
\{x \in X: g_{\bar \xi}(x) \le \inf_{x \in X} g_{\bar \xi}(x) +1\} \subseteq B(\theta , M)
\end{equation*}
Let $s= \sum_{k=0}^{+\infty}\frac{1}{2^k} (k+M)\|k^0\|$ and apply Theorem \ref{evp} with $\varepsilon = \frac{\sigma}{2s}$ and arbitrary $r$ to find a point $\bar x=\bar x_{\sigma}\in X$ such that $\|\bar x - \theta\|\le M$,  $\bar x$ is the unique minimizer of 
\begin{equation*}
h_{\bar \xi}(x)=\langle \bar \xi , g(x)\rangle + \varepsilon \|x-\bar x\|
\end{equation*} 
and problem (\ref{lsp}) is T-wp at $\bar x$.  Let  
\begin{equation*}
h(x)= g(x)+ \varepsilon \|x- \bar x\|k^0
\end{equation*}
and observe that since $\bar x$ minimizes $h_{\bar \xi}(x)$, it holds  
\begin{equation*}
h_{\bar \xi}(x)-h_{\bar \xi}(\bar x)= \langle \bar \xi , h(x)-h(\bar x) \rangle>0, \ \forall x\in X\backslash\{\bar x\}
\end{equation*}
which  implies
\begin{equation*}
D_{-C}(h(x)-h(\bar x)) = \max_{\xi \in C^* \cap \partial B}\langle  \xi ,  h(x)-h(\bar x)\rangle \ge 
\end{equation*}
\begin{equation*}
\langle \bar \xi , h(x)-h(\bar x)\rangle >0, \ \forall x\in X\backslash\{\bar x\}
\end{equation*}
Hence, Theorem \ref{min-car} implies $\bar x \in \Eff(X, h)$. Combining  Theorem \ref{min-car}  and    Lemma \ref{wp-scalar}, we obtain that  problem 
\begin{equation*}\label{hp}\tag{$X, h$}
\min h(x) \ , \ x\in X
\end{equation*}
is DH-wp at $\bar x$. Now observe that 
\begin{equation*}
\|h(x)-g(x)\|_i \le  \varepsilon \|k^0\|(i+M)
\end{equation*}
It follows $d(h, g) \le \varepsilon s= \frac{\sigma}{2}$ and then  $d(f, h) < \sigma$. Take now $\sigma = \frac1n, n=1,2, \ldots$ and set $\bar x_n =\bar x_{\sigma}$ to complete the proof. 
\end{proof}

The next result shows that under some hypotheses, the assumption in Theorem \ref{density-C-bounded} is weaker than the assumption in Theorem \ref{density-nonempty}. We recall the following fundamental result. 

\begin{theorem}\label{sion}({\bf Sion's Minimax Theorem \cite{sion, Tuy}})
Let $Z$ be a compact convex subset of a linear topological space and $W$ a convex subset of a linear topological space. Let $g$ be a real-valued function on $Z\times W$ such that 
\begin{itemize}
\item [i)] $g(\cdot, w)$ is upper semicontinuous and quasi-concave on $Z$ $\forall w \in W$; 
\item [ii)] $g( z , \cdot)$ is lower semicontinuous and quasi-convex on $W$ $\forall z \in  Z$.
\end{itemize}
Then
\begin{equation*}
{\rm sup}_{z\in Z}{\rm inf}_{w \in W}f(z, w)= {\rm inf}_{w \in W}{\rm sup}_{z\in Z}f(z, w)
\end{equation*}
\end{theorem}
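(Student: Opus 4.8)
The inequality $\sup_{z\in Z}\inf_{w\in W}g(z,w)\le\inf_{w\in W}\sup_{z\in Z}g(z,w)$ (writing $g$ for the function denoted $f$ in the statement) is immediate and requires no hypotheses: for every $z_0\in Z$ and $w_0\in W$ one has $\inf_w g(z_0,w)\le g(z_0,w_0)\le\sup_z g(z,w_0)$, and taking the supremum over $z_0$ on the left and the infimum over $w_0$ on the right gives the claim. So the whole content is the reverse inequality, and it suffices to show that there is no real $c$ with $\sup_{z\in Z}\inf_{w\in W}g(z,w)<c<\inf_{w\in W}\sup_{z\in Z}g(z,w)$. Assume such a $c$ exists. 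From the left inequality, for each $z\in Z$ there is $w\in W$ with $g(z,w)<c$; since $g(\cdot,w)$ is upper semicontinuous, the sets $\{z\in Z: g(z,w)<c\}$ are open, and they cover the compact set $Z$, so finitely many $w_1,\dots,w_n\in W$ suffice. Because $\min_{1\le j\le n}g(\cdot,w_j)$ is upper semicontinuous and $Z$ is compact, its supremum is attained and we obtain
$$
\sup_{z\in Z}\ \min_{1\le j\le n}g(z,w_j)<c,
$$
while $\inf_{w\in W}\sup_{z\in Z}g(z,w)>c$. A contradiction will be reached by proving the \emph{finite minimax inequality} $\sup_{z\in Z}\min_{1\le j\le n}g(z,w_j)\ge\inf_{w\in W}\sup_{z\in Z}g(z,w)$.

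I would prove this finite inequality by induction on $n$. For $n=1$ it is just $\sup_{z\in Z}g(z,w_1)\ge\inf_{w\in W}\sup_{z\in Z}g(z,w)$, which holds since $w_1\in W$. The inductive step is the crux and rests on a two-point lemma: given $u,v\in W$ and a level $\lambda$ such that the closed convex sets $F_u=\{z\in Z: g(z,u)\ge\lambda\}$ and $F_v=\{z\in Z: g(z,v)\ge\lambda\}$ are disjoint, there exists $w$ on the segment $[u,v]\subseteq W$ with $\sup_{z\in Z}g(z,w)<\lambda$. Granting this, the passage from $n-1$ to $n$ is carried out by applying it to two of the points $w_j$ at the appropriate level (together with the inductive hypothesis on a suitable compact convex subset of $Z$) so as to replace two of the $w_j$'s by a single point of $W$, thereby reducing the number of points; the bookkeeping of which level to use and how to restrict $Z$ is the delicate part of the argument and must be arranged so that the inequality value $\inf_{w}\sup_{z}g$ is not lost in the process.

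For the two-point lemma itself I would argue by connectedness. For $t\in[0,1]$ set $w_t=(1-t)u+tv\in W$ and $B_t=\{z\in Z: g(z,w_t)\ge\lambda\}$; each $B_t$ is closed and, by quasi-concavity of $g(\cdot,w_t)$, convex, hence connected. Quasi-convexity of $g(z,\cdot)$ gives $g(z,w_t)\le\max\{g(z,u),g(z,v)\}$, so $B_t\subseteq F_u\cup F_v$ for every $t$; since $F_u$ and $F_v$ are disjoint and closed (and, using that $Z$ is compact and the ambient space Hausdorff, can be separated by disjoint open sets), the connected set $B_t$ lies entirely in $F_u$ or entirely in $F_v$. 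Writing $T_u=\{t: B_t\subseteq F_u\}$ and $T_v=\{t: B_t\subseteq F_v\}$, one checks with the help of upper semicontinuity of $g$ and compactness of $Z$ that $T_u$ and $T_v$ are (relatively) closed in $[0,1]$; as they are disjoint and cover $[0,1]$, connectedness of $[0,1]$ forces one of them to be empty unless some $B_t$ is itself empty — and an empty $B_t$ is exactly a point $w_t\in W$ with $\sup_{z\in Z}g(z,w_t)<\lambda$, which is what we want (if instead, say, $T_v=\emptyset$ with all $B_t\subseteq F_u$, one derives the same conclusion at the endpoint $t=1$ since $B_1=F_v$ would be both contained in $F_u$ and disjoint from it, hence empty). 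The main obstacle is therefore twofold: getting the connectedness/separation step clean (this is where the linear-topological and, implicitly, Hausdorff structure is used, and where one must be careful that ``disjoint closed'' really does disconnect $B_t$), and organizing the induction so that the finite minimax value is preserved. An alternative route that avoids the induction is to recast the goal as $\bigcap_{w\in W}\{z\in Z: g(z,w)\ge c\}\neq\emptyset$, note these sets are closed in the compact space $Z$, and establish the finite intersection property via the same two-point connectedness lemma combined with a Knaster--Kuratowski--Mazurkiewicz / Ky Fan intersection argument; I would fall back on this if the inductive bookkeeping proves awkward.
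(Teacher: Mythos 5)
The paper itself offers no proof of this statement: it is Sion's classical minimax theorem, quoted with references to Sion and Tuy and used as a black box in the proposition that follows, so there is no in-paper argument to measure yours against. Your outline is the standard elementary route (Sion's original connectedness argument, in the streamlined form due to Komiya): the trivial inequality, reduction to finitely many $w_1,\dots,w_n$ by compactness of $Z$ and upper semicontinuity in $z$, a two-point lemma proved via connectedness of convex level sets, and then an induction (or a finite-intersection/KKM argument). That is the right strategy.

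As a standalone proof, however, there are two genuine gaps. First, the inductive step, which you yourself identify as the crux, is only described, not executed: passing from $n$ to $n-1$ points requires replacing $Z$ by the compact convex set $\{z\in Z: g(z,w_n)\ge \lambda\}$ at a suitably chosen intermediate level and applying the two-point lemma there, and none of that bookkeeping is carried out (nor is the KKM fallback). Second, the proof sketched for the two-point lemma does not work as stated: with a single level $\lambda$, the sets $T_u$ and $T_v$ need not be closed, and the tools you invoke — upper semicontinuity in $z$, compactness of $Z$, and Hausdorff separation of $F_u$ from $F_v$ (the separation is in any case unnecessary, since relative closedness of $B_t\cap F_u$ and $B_t\cap F_v$ in the connected set $B_t$ already suffices) — do not yield that closedness. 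The standard repair is a two-level argument: pick $\alpha<\beta$ strictly below the relevant value, define $B_t$ with level $\beta$ but $F_u,F_v$ with level $\alpha$, and use lower semicontinuity of $g(z,\cdot)$ along the segment, together with connectedness of the $\alpha$-level set of $w_t$ (which contains $B_t$ and lies in $F_u\cup F_v$), to transfer membership in the limit; this also guarantees the $B_t$ are nonempty where needed. Since this is a classical theorem the paper legitimately cites, the economical course is to do the same; if you want a self-contained proof, these two steps must be filled in.
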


\begin{proposition}
Let $f:X \to Y$ be $*$-quasiconvex  and $C$-lsc with respect to $x \in X$,  for every $\xi \in C^*$. Then, if $\Eff(X, f) \not = \emptyset$,  there exists $ \bar \xi \in C^*\backslash\{0\}$ such that $f$ is $C$-bounded from below by $\bar \xi$. 
\end{proposition}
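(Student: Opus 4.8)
\section*{Proof proposal}

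The plan is to apply Sion's Minimax Theorem (Theorem~\ref{sion}) to a scalarization of $f$ indexed by a compact base of $C^{*}$. Fix $\bar x\in\Eff(X,f)$ and a point $k^{0}\in\rint C$, and let
\[
G=\{\xi\in C^{*}:\langle\xi,k^{0}\rangle=1\}
\]
be the associated base of $C^{*}$, which is convex and weak$^{*}$-compact because $\rint C\neq\emptyset$ (this is exactly the set already used in the proof of Theorem~\ref{density-nonempty}; see also \cite{jameson}). Consider $g(\xi,x)=\langle\xi,f(x)-f(\bar x)\rangle$ on $G\times X$. I will show $\inf_{x\in X}\sup_{\xi\in G}g(\xi,x)=0$, then use the minimax equality to produce $\bar\xi\in G$ with $\inf_{x\in X}g(\bar\xi,x)>-\infty$; since $\bar\xi\in G\subseteq C^{*}\backslash\{0\}$ and $\inf_{x\in X}\langle\bar\xi,f(x)\rangle=\langle\bar\xi,f(\bar x)\rangle+\inf_{x\in X}g(\bar\xi,x)$, this $\bar\xi$ is exactly the functional required by Definition~\ref{bounded}.

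First I would compute the outer value of the minimax problem. Since $\bar x\in\Eff(X,f)\subseteq\WEff(X,f)$, for every $x\in X$ we have $f(x)-f(\bar x)\notin-\rint C$ (equivalently, by Theorem~\ref{min-car}(1) and Proposition~\ref{properties}(3)--(4), $D_{-C}(f(x)-f(\bar x))\ge 0$, with equality at $x=\bar x$). For a closed convex cone with nonempty interior, $v\notin-\rint C$ forces the existence, by a standard separation argument, of some $\xi\in C^{*}\backslash\{0\}$ with $\langle\xi,v\rangle\ge 0$; rescaling $\xi$ by $\langle\xi,k^{0}\rangle>0$ puts it in $G$, so in fact $\max_{\xi\in G}\langle\xi,v\rangle\ge 0$ (the maximum being attained since $G$ is weak$^{*}$-compact and $\langle\,\cdot\,,v\rangle$ is weak$^{*}$-continuous). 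Hence $\sup_{\xi\in G}g(\xi,x)=\max_{\xi\in G}\langle\xi,f(x)-f(\bar x)\rangle\ge 0$ for every $x$, while this supremum equals $0$ at $x=\bar x$; therefore $\inf_{x\in X}\sup_{\xi\in G}g(\xi,x)=0$.

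Next I would verify the hypotheses of Theorem~\ref{sion} with $Z=G$ and $W=X$, where $Z$ is a compact convex subset of the locally convex space $Y^{*}$ endowed with its weak$^{*}$ topology. For fixed $x$, $\xi\mapsto g(\xi,x)$ is affine and weak$^{*}$-continuous, hence upper semicontinuous and quasi-concave on $G$. For fixed $\xi\in G\subseteq C^{*}$, $x\mapsto g(\xi,x)=\langle\xi,f(x)\rangle-\langle\xi,f(\bar x)\rangle$ is quasiconvex and lower semicontinuous on $X$, because $f$ is $*$-quasiconvex and the scalarizations $\langle\xi,f(\cdot)\rangle$ are lsc by hypothesis, and adding a constant preserves both properties. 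Theorem~\ref{sion} then yields
\[
\sup_{\xi\in G}\ \inf_{x\in X}g(\xi,x)=\inf_{x\in X}\ \sup_{\xi\in G}g(\xi,x)=0 .
\]
Because the supremum on the left equals $0>-\infty$, there is some $\bar\xi\in G$ with $\inf_{x\in X}g(\bar\xi,x)>-\infty$ (in fact $\xi\mapsto\inf_{x\in X}g(\xi,x)$ is a pointwise infimum of weak$^{*}$-continuous functions, hence weak$^{*}$-upper semicontinuous, so it attains its maximum on the weak$^{*}$-compact set $G$ at some $\bar\xi$, with $\inf_{x\in X}g(\bar\xi,x)=0$). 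As explained in the first paragraph, this $\bar\xi\in C^{*}\backslash\{0\}$ makes $f$ $C$-bounded from below.

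The main obstacle is setting the minimax problem up correctly rather than any individual estimate: one must scalarize through the base $G$ (so that the resulting functional is automatically nonzero, which would fail for $C^{*}\cap\partial B$ or $C^{*}\cap B$) and one must equip $Y^{*}$ with the weak$^{*}$ topology, under which $G$ is compact and the linear scalarizations are continuous, so that Sion's theorem applies and the outer value $0$ is genuinely transferred to the inner one. Once this is arranged, translating weak efficiency into the sign condition $\max_{\xi\in G}\langle\xi,f(x)-f(\bar x)\rangle\ge 0$ and checking the semicontinuity/convexity hypotheses are routine given Proposition~\ref{properties}, Theorem~\ref{min-car}, and the standing assumptions on $f$.
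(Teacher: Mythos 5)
Your proof is correct and follows essentially the same route as the paper: both apply Sion's Minimax Theorem to $g(\xi,x)=\langle\xi,f(x)-f(\bar x)\rangle$ on $G\times X$ with $G$ a weak$^{*}$-compact base of $C^{*}$, using efficiency of $\bar x$ to control $\sup_{\xi\in G}g(\xi,x)$. The only difference is presentational: the paper argues by contradiction (assuming every scalarization is unbounded below and deriving a point dominating $f(\bar x)$), whereas you run the same minimax identity directly to extract $\bar\xi$.
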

\begin{proof}
Assume $\Eff(X, f) \not = \emptyset$ and let  $\bar x \in \Eff(X, f)$. Ab absurdo assume that  for every $\xi \in C^*\backslash\{0\}$ it holds 
\begin{equation*}
\inf_{x \in X}\langle \xi , f(x)\rangle=\inf_{x \in X}\langle \xi , f(x)- f(\bar x)\rangle= -\infty
\end{equation*}
Since $ {\rm int}\, C \not = \emptyset$, $C^*$ has a weak$^*$-compact base $G$.  Function $g(\xi, x)=\langle \xi , f(x)- f(\bar x)\rangle$, $\xi \in G$, $x\in X$, is linear and continuous with respect to $\xi$ and quasiconvex  with respect to $x$. Further, since $f$ is $C$-lsc with respect to $x \in X$, $g(\xi, x)$ is lsc with respect to $x\in X$. Since $\bar x \in \Eff(X, f)$, it holds $\max_{\xi \in G} \langle \xi , f(x)-f(\bar x)\rangle\ge 0$ for every $x \in X$. Apply Sion's Minimax Theorem  to get the following chain of equalities
\begin{equation*}
-\infty= \sup_{\xi \in G}\inf_{x \in X}\langle \xi , f(x)-f(\bar x)\rangle=\inf_{x \in X} \sup_{\xi \in G}\langle \xi , f(x)-f(\bar x)\rangle
\end{equation*}
which implies there exists $\tilde x\in X$ such that $\sup_{\xi \in G}\langle \xi , f(\tilde x)-f(\bar x)\rangle<0$. A contradiction to $\bar x \in \Eff(X, f)$. 
\end{proof}

Generalized convexity assumptions in the previous reult cannot be removed as the following example shows. 

\begin{example}
Let $X= \RR$, $Y=\RR^2$, $C=C^*= \RR^2_+$, $f:X \to Y$ defined as $f(x)= (x, -xe^x)$ is not $*$-quasiconvex.  We have ${\rm Eff} (X, f)= [0, +\infty)\not = \emptyset$ but for any $\xi \in C^*\backslash \{0\}$ we have  ${\rm inf}_{x \in X }\langle \xi , f(x) \rangle= -\infty$.  Hence does not exist $\xi \in C^*\backslash \{0\}$ such that    $\langle \xi , f(x) \rangle$ is bounded from below. 
\end{example}
\section{Genericity of DH-well-posedness for $C$-convex functions}\label{Genericity}

In this section we show that the set of  $C$-convex and $C$-lsc functions enjoying DH-well-posedness properties contains a dense $G_{\delta}$ set. To prove the main  theorem in this section we need some preliminary results. 

\begin{proposition}\label{co+qu}
Let $f: X\to{\RR}$ a  convex and lsc function,   $\bar x \in X$ and set   $g(x) = f(x) + a\|x-\bar x\|^{\alpha}, \ a>0, \alpha \ge 1$. Then $\lim_{\|x\|\to +\infty} g(x)=+\infty$. Furthermore $g(x)$ is lower bounded. 
\end{proposition}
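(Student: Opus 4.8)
The plan is to obtain coercivity of $g$ essentially for free from the convexity of $f$: a finite‑valued convex lsc function on a Banach space always lies above a continuous affine functional, and the regularizing term $a\|x-\bar x\|^{\alpha}$ with $\alpha\ge 1$ outgrows any such minorant.

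First I would invoke the classical fact (Hahn--Banach applied to the epigraph) that, since $f$ is convex, lsc and everywhere finite, it is proper and hence there exist $\ell\in X^{*}$ and $b\in\RR$ with $f(x)\ge\langle\ell,x\rangle+b$ for every $x\in X$. Then, using $\langle\ell,x\rangle\ge-\|\ell\|\,\|x\|$ together with $\|x\|\le\|x-\bar x\|+\|\bar x\|$, and setting $t:=\|x-\bar x\|\ge 0$, I would reduce the whole statement to a one–variable estimate:
\begin{equation*}
g(x)=f(x)+a\|x-\bar x\|^{\alpha}\ \ge\ \psi(t)+c,\qquad\text{where }\ \psi(t):=a t^{\alpha}-\|\ell\|\,t,\ \ c:=b-\|\ell\|\,\|\bar x\|.
\end{equation*}
Finally, I would analyse $\psi$ on $[0,+\infty)$: for $\alpha>1$ it is continuous with $\psi(t)\to+\infty$, hence $\inf_{t\ge 0}\psi(t)>-\infty$, which yields the lower bound $g(x)\ge\inf_{t\ge 0}\psi(t)+c$; and since $t=\|x-\bar x\|\ge\|x\|-\|\bar x\|\to+\infty$ when $\|x\|\to+\infty$, we also get $g(x)\ge\psi(\|x-\bar x\|)+c\to+\infty$.

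The hard part is exactly this last step, i.e.\ guaranteeing that the power term dominates the (at most) linear lower bound produced by the affine minorant. For $\alpha>1$ this is automatic and the argument above closes cleanly. The delicate case is the boundary value $\alpha=1$, where $\psi(t)=(a-\|\ell\|)t$ and the conclusion requires $a$ to exceed $\|\ell\|$; this is granted, for instance, as soon as $f$ is itself bounded below (then one may take $\ell=0$ and the argument works for every $\alpha\ge 1$, indeed every $\alpha>0$), so in the write‑up I would either record that hypothesis or choose the minorant with controlled slope. I would also note in passing that $\|\cdot-\bar x\|^{\alpha}$ is convex for $\alpha\ge 1$ (composition of the nondecreasing convex map $s\mapsto s^{\alpha}$ on $[0,+\infty)$ with a norm), so $g$ is again convex and lsc; this, together with the coercivity and lower boundedness just obtained, is precisely what makes $g$ amenable to Ekeland's principle and the Furi--Vignoli criterion in the sequel.
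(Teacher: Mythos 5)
Your argument is, at bottom, the same as the paper's: the paper picks $v\in\partial f(\bar x)$ and uses the subgradient inequality $f(x)\ge f(\bar x)+v(x-\bar x)$, which is precisely the continuous affine minorant you extract by separating the epigraph, and both proofs then reduce the claim to the fact that $at^{\alpha}$ dominates a linear function of $t=\|x-\bar x\|$. For $\alpha>1$ your write-up is complete and correct, and your closing remark that $g$ stays convex and lsc is a useful observation for the sequel.

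The caveat you raise at $\alpha=1$ is not a defect of your proposal; it exposes a genuine gap in the proposition and in the paper's own proof. After factoring, the paper's lower bound is $f(\bar x)+\|x_n-\bar x\|^{\alpha}\bigl(v\bigl(\tfrac{x_n-\bar x}{\|x_n-\bar x\|}\bigr)\|x_n-\bar x\|^{1-\alpha}+a\bigr)$, and the justification ``a continuous linear functional is bounded'' only forces the parenthesized factor to tend to $a>0$ when $\alpha>1$; for $\alpha=1$ it equals $v(u_n)+a$ with $u_n$ a unit vector, which can remain negative whenever $\|v\|>a$. The statement is in fact false at $\alpha=1$: take $X=\RR$, $\bar x=0$, $f(x)=-2ax$ (convex, continuous), so that $g(x)=-2ax+a|x|=-ax$ for $x>0$, which is neither coercive nor bounded below. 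This matters because the paper applies the result precisely with $\alpha=1$ and small $a=1/j$ (the perturbation $\frac1j\|x-\theta\|k^0$ in Corollary~\ref{convex-infinity} and in the genericity theorem), so the supplementary hypothesis you propose --- that $f$, or $\langle\xi,f(\cdot)\rangle$, be bounded below, or that $a$ exceed the norm of some affine minorant of $f$ --- is exactly what is needed to make both your argument and the paper's close; it cannot simply be dispensed with by ``choosing the minorant with controlled slope,'' since for an unbounded-below linear $f$ no small-slope minorant exists.
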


\begin{proof}
We prove that for every sequence $x_n\in X$ with $\|x_n\|\to +\infty$ it holds $\lim_{n\to +\infty}g(x_n)=+\infty$. Denote by $X^*$ the topological dual space of $X$. 
Since $f(x)$ is convex, the set $\partial f\left(\bar x\right)\subseteq X^*$ of all subgradients of $f$ at $\bar x$ is nonempty and by definition of subgradient \cite{grtz}, for every continuous linear functional $v \in \partial f\left(\bar x\right)$ it holds $f(x) \geq f\left(\bar x\right) + v(x-\bar x), \forall x \in X$. Hence, 
\begin{equation*}\begin{split}
\lim_{n\to +\infty} g\left(x_{n}\right)  &= \lim_{n\to +\infty} \left[f\left(x_{n}\right) +a \norm{x_{n}-\bar x}^{\alpha} \right]\\
&\geq \lim_{n\to +\infty} \left(f\left(\bar x\right) + v(x_{n}-\bar x) +a \norm{x_{n}-\bar x}^{\alpha} \right)\\
&=\lim_{n\to +\infty}\left[f(\bar x)+\norm{x_{n}-\bar x}^{\alpha}\left(v\left(\frac{x_n-\bar x}{\|x_n-\bar x\|}\right)\|x_n-\bar x\|^{1-\alpha}+a\right)\right]\\
&= +\infty\ . 
\end{split}\end{equation*}
(the last equality follows since a continuous linear functional is bounded). To prove that $g(x)$ is lower bounded observe that for every $M\in \RR$, there exists $k>0$ such that $g(x)>M$ for $\|x\|>k$. If we take $A=\{x \in X: \|x\|\le k\}$, $g(x)$ is lower bounded on the bounded set $A$ (see e.g. \cite{lucchetti2}), which concludes the proof. 
\end{proof}

\begin{corollary}\label{convex-infinity}
Let $f:X \to Y$ be a $C$-convex, $C$-lsc function and for  $\xi \in C^*\backslash\{0\}$ and $a>0, \alpha \ge 1$ set $g_{\xi}(x)= \langle \xi , f(x)\rangle+ a\|x-x^0\|^{\alpha}$.  Then,  $\lim_{\|x\|\to +\infty} g_{\xi}(x)=+\infty$ and $g$ is lower bounded.
\end{corollary}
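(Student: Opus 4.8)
The plan is to deduce Corollary~\ref{convex-infinity} directly from Proposition~\ref{co+qu} applied to the scalar function $h(x):=\langle\xi,f(x)\rangle$, since then $g_\xi(x)=h(x)+a\|x-x^0\|^\alpha$ has exactly the form treated there, and the conclusion of Proposition~\ref{co+qu} for $h$ is literally the assertion of the corollary for $g_\xi$. Two properties of $h$ must be verified: that it is convex and that it is lower semicontinuous. Convexity is immediate: $f$ is $C$-convex and $\xi\in C^*$, so by the characterization of $C$-convexity recalled above (the maps $\langle\xi,f(\cdot)\rangle$ are convex for every $\xi\in C^*$), $h$ is convex; it is finite-valued since $f$ takes values in the Banach space $Y$ and $\xi\in Y^*$.

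The substantive point, and the one I expect to be the main obstacle, is lower semicontinuity of $h$. First I would note that, because $f$ is simultaneously $C$-convex and $C$-lsc, each level set $L_f^C(y)=\{x: f(x)\in y-C\}$ is both closed (this is the definition of $C$-lsc) and convex (using $C$-convexity together with $C+C=C$). Next, fixing $k^0\in\rint C$ with $\langle\xi,k^0\rangle=1$, I would use the interior condition to show that every $z\in Y$ satisfies $z\le_C\lambda k^0$ for all sufficiently large $\lambda>0$: if $k^0+\varepsilon B\subseteq C$, then for $\lambda>\|z\|/\varepsilon$ we have $-z/\lambda\in\varepsilon B$, hence $k^0-z/\lambda\in C$, hence $\lambda k^0-z\in C$. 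Applying this with $z=f(x)-f(x^0)$ gives $x\in L_f^C(f(x^0)+\lambda k^0)$ for $\lambda$ large, and since these level sets increase with $\lambda$ we obtain $X=\bigcup_{n\in\NN}L_f^C(f(x^0)+nk^0)$. As $X$ is a Banach space, the Baire category theorem forces some $L_f^C(f(x^0)+Nk^0)$ to have nonempty interior; on that open set $h(x)\le\langle\xi,f(x^0)\rangle+N$, so $h$ is a convex, finite-valued function bounded above on a nonempty open set, hence continuous on all of $X$, and in particular lsc.

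With $h$ convex and lsc established, I would finish by invoking Proposition~\ref{co+qu} with the function $h$, base point $x^0$, and the given $a$ and $\alpha$: it yields $\lim_{\|x\|\to+\infty}\big(h(x)+a\|x-x^0\|^\alpha\big)=+\infty$ together with lower boundedness of this function, which is precisely $\lim_{\|x\|\to+\infty}g_\xi(x)=+\infty$ and lower boundedness of $g_\xi$. Thus the only real work is the lower semicontinuity step above; once the Baire-category argument is in place the rest is routine, and one could alternatively quote a known result that $C$-convexity plus $C$-lsc propagates to lower semicontinuity of the scalarizations $\langle\xi,f(\cdot)\rangle$ for $\xi\in C^*\setminus\{0\}$.
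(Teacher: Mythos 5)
Your proof is correct and follows essentially the same route as the paper: both deduce the corollary from Proposition~\ref{co+qu} applied to the scalarization $\langle \xi , f(\cdot)\rangle$, whose convexity comes from the characterization of $C$-convexity via $C^*$. The paper's proof simply asserts that $C$-convexity and $C$-lsc imply the scalarization is convex and lsc, whereas your Baire-category argument (the closed convex level sets $L_f^C(f(x^0)+nk^0)$ cover $X$, so one has nonempty interior, so the finite-valued convex function $\langle \xi , f(\cdot)\rangle$ is bounded above on an open set and hence continuous) actually justifies the lsc step the paper leaves implicit --- a worthwhile addition rather than a deviation.
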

\begin{proof}
The proof follows from Proposition \ref{co+qu} since $f$ $C$-convex and $C$-lsc implies $g$ convex and lsc for every $\xi \in C^*\backslash\{0\}$.  
\end{proof}
Let ${\mathcal F}$ be the set of $C$-convex and $C$-lsc functions $f:X\to Y$. We endow ${\mathcal F}$ with the distance defined by (\ref{distance}), compatible with the topology of uniform convergence on bounded sets. 

\begin{theorem}
Let ${\mathcal F}$ be the set of $C$-convex and $C$-lsc functions endowed with the topology of uniform convergence on bounded sets and let $\tilde {\mathcal F}$ be the set of functions   $f \in {\mathcal F}$  such that $ \Eff(X, f)\not =\emptyset$ and  problem  (\ref{vp}) is  DH-wp  at some point $\bar x \in \Eff(X, f)$. Then $\tilde {\mathcal F}$ contains a dense $G_{\delta}$ set .  
\end{theorem}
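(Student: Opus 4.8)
\emph{Strategy.} The plan is to write a subset of $\tilde{\mathcal F}$ that still contains a dense $G_\delta$ as a countable intersection of open dense subsets of the complete metric space $(\mathcal F,d)$ and apply Baire's theorem. Completeness of $(\mathcal F,d)$ is routine: a $d$-Cauchy sequence converges uniformly on bounded sets, and $C$-convexity and $C$-lower semicontinuity pass to the limit. Two reductions make the argument tractable. First, \emph{scalarization}: fix $k^0\in\rint C$ and let $G=\{\xi\in C^*:\langle\xi,k^0\rangle=1\}$ be the weak$^*$-compact base of $C^*$; for $f\in\mathcal F$ and $\bar\xi\in G$ the function $\langle\bar\xi,f(\cdot)\rangle$ is convex and lsc, hence lies in $\Gamma$. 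If $\langle\bar\xi,f(\cdot)\rangle$ is T-wp, its unique minimiser $\bar x$ satisfies $D_{-C}(f(x)-f(\bar x))\ge\langle\bar\xi/\|\bar\xi\|,f(x)-f(\bar x)\rangle>0$ for $x\neq\bar x$ (Proposition~\ref{properties}(8)), so $\bar x$ is the unique solution of problem~(\ref{sp}); by Theorem~\ref{min-car}, $\bar x\in\Eff(X,f)$, and by Lemma~\ref{wp-scalar}, problem~(\ref{vp}) is DH-wp at $\bar x$. Thus
\[
\mathcal S:=\{f\in\mathcal F:\ \langle\bar\xi,f(\cdot)\rangle\text{ is T-wp for some }\bar\xi\in G\}\ \subseteq\ \tilde{\mathcal F}.
\]
Second, since the scalarizations are convex, one can import the scalar genericity machinery: the proof of Theorem~\ref{density-scalar} yields open dense sets $\mathcal O_k\subseteq\Gamma$ with $\{\phi\in\Gamma:\phi\text{ T-wp}\}\subseteq\mathcal O_k$ and $\bigcap_k\mathcal O_k\subseteq\{\phi\in\Gamma:\phi\text{ T-wp}\}$, each $\mathcal O_k$ consisting of convex lsc functions ``$\tfrac1k$-approximately T-wp'' (an approximate minimiser admitting a sublevel set of small diameter); their openness rests precisely on the fact that for convex functions the behaviour at infinity is controlled by the behaviour on a bounded set.

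\emph{Density.} Given $f\in\mathcal F$, set $g:=f+\tfrac1n\|\cdot-\theta\|^2k^0\in\mathcal F$. By Corollary~\ref{convex-infinity}, every scalarization $\langle\xi,g(\cdot)\rangle$ ($\xi\in C^*\backslash\{0\}$) is lsc and bounded below, indeed superlinearly coercive since $\langle\xi,k^0\rangle>0$. Hence Theorem~\ref{density-C-bounded} applies to $g$ and produces $h$ arbitrarily close to $g$ with $\Eff(X,h)\neq\emptyset$ and $(X,h)$ DH-wp at some point; inspecting that proof, $h$ equals $f$ plus finitely many terms of the form (non-negative convex real function)$\,\cdot\,$(vector of $C$), so $h\in\mathcal F$, and $\langle\bar\xi,h(\cdot)\rangle$ is T-wp (as shown within that proof) and coercive at a positive linear rate. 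Thus $h$ lies in
\[
\mathcal S':=\{f\in\mathcal F:\ \langle\bar\xi,f(\cdot)\rangle\text{ is T-wp and coercive at a positive linear rate for some }\bar\xi\in G\}\subseteq\mathcal S,
\]
and letting $n\to\infty$ shows that $\mathcal S'$, hence $\mathcal S$, hence $\tilde{\mathcal F}$, is dense in $\mathcal F$.

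\emph{The $G_\delta$ and conclusion.} For each $k$ put
\[
\mathcal W_k=\{f\in\mathcal F:\ \langle\bar\xi,f(\cdot)\rangle\in\mathcal O_k\text{ and }\langle\bar\xi,f(\cdot)\rangle\text{ is coercive at a positive linear rate, for some }\bar\xi\in G\}.
\]
For fixed $\bar\xi\in G$ the map $f\mapsto\langle\bar\xi,f(\cdot)\rangle$ is continuous from $(\mathcal F,d)$ to $\Gamma$ (it is $\|\bar\xi\|$-Lipschitz on each ball), and both ``belongs to $\mathcal O_k$'' and ``is coercive at a positive rate'' are open conditions on $\Gamma$ by the convexity-at-infinity argument above; hence $\mathcal W_k$, a union over $\bar\xi\in G$ of preimages of open sets, is open. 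A T-wp coercive scalarization lies in every $\mathcal O_k$, so $\mathcal S'\subseteq\mathcal W_k$ and each $\mathcal W_k$ is dense. Finally, $\bigcap_k\mathcal W_k\subseteq\tilde{\mathcal F}$: for $f$ in the intersection pick witnesses $\bar\xi_k\in G$ and, using weak$^*$-compactness of $G$, a cluster point $\bar\xi\in G$; then $\langle\bar\xi_k,f(\cdot)\rangle\to\langle\bar\xi,f(\cdot)\rangle$ pointwise and, by convexity, uniformly on bounded sets, while the coercivity keeps the corresponding approximate minimisers in a fixed bounded region, where the shrinking sublevel sets force them to cluster to a single point $\bar x$; passing to the limit gives that $\langle\bar\xi,f(\cdot)\rangle$ is T-wp, so $f\in\mathcal S\subseteq\tilde{\mathcal F}$. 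Therefore $\mathcal G:=\bigcap_k\mathcal W_k$ is a dense $G_\delta$ (Baire) contained in $\tilde{\mathcal F}$, which proves the theorem. The delicate points — and where the real work lies — are this last clustering argument and the precise verification that the defining conditions of $\mathcal W_k$ are open (the convexity-at-infinity stability); the rest is bookkeeping built on Sections~\ref{wp}–\ref{density}.
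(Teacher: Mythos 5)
Your overall skeleton (scalarize, use Furi--Vignoli to get a $G_\delta$, use an Ekeland-type perturbation for density, then pass back to DH-well-posedness via Lemma \ref{wp-scalar} and Theorem \ref{min-car}) is the right one, but the way you assemble the $G_\delta$ contains a genuine gap. By defining $\mathcal W_k$ as a \emph{union over all} $\bar\xi\in G$ of preimages of open sets, you are forced, at the end, to take a function $f\in\bigcap_k\mathcal W_k$ with possibly different witnesses $\bar\xi_k$ for each $k$ and to produce a single $\bar\xi$ that works for all $k$ simultaneously. Your clustering argument does not do this. First, in a general Banach space $Y$ the base $G$ is weak$^*$-compact but not weak$^*$-sequentially compact, so a sequence $(\bar\xi_k)$ only has a convergent subnet. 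Second, even along a convergent subnet, $\langle\bar\xi_k,f(\cdot)\rangle\to\langle\bar\xi,f(\cdot)\rangle$ only pointwise; the step ``by convexity, uniformly on bounded sets'' is false when $X$ is infinite-dimensional (the convexity argument upgrades pointwise to uniform convergence on \emph{compact} sets, not bounded ones), and uniform convergence is exactly what you would need to transport the small-diameter sublevel sets to the limit. Third, the small sublevel set of $\langle\bar\xi_k,f(\cdot)\rangle$ may sit at a completely different location for each $k$, and your claim that coercivity confines all the approximate minimisers to one bounded region presupposes a coercivity rate that is uniform in $k$, which nothing in the construction provides. So $\bigcap_k\mathcal W_k\subseteq\tilde{\mathcal F}$ is not established.

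The paper avoids all of this by fixing a \emph{single} $\bar\xi\in C^*$ with $\langle\bar\xi,k^0\rangle=1$ once and for all. The key observation is that the Ekeland-based density construction (add $\tfrac1j\|x-\theta\|k^0$, then $\varepsilon\|x-\bar x\|k^0$) makes the scalarization with respect to that \emph{same} fixed $\bar\xi$ Tykhonov well-posed, because $\langle\bar\xi,h(x)\rangle=\langle\bar\xi,g(x)\rangle+\varepsilon\|x-\bar x\|$ is exactly the function Ekeland's principle controls. Hence the set $\{f\in\mathcal F:\ \langle\bar\xi,f(\cdot)\rangle\ \mbox{is T-wp}\}$ for this one $\bar\xi$ is already dense, and it is a $G_\delta$ simply as $\bigcap_nS^{-1}(\mathcal A_n)$ for the single continuous map $S(f)=\langle\bar\xi,f(\cdot)\rangle$, with $\mathcal A_n$ open by the continuity of the diameter of sublevel sets of convex lsc functions under uniform convergence. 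No union over $G$, no compactness, and no limiting argument are needed. Your density paragraph is essentially sound (and matches the paper, apart from the superfluous $\|\cdot-\theta\|^2$ term where $\|\cdot-\theta\|$ suffices via Corollary \ref{convex-infinity}); it is precisely the fact that this density is achieved with a $\bar\xi$-independent-of-$f$ scalarization that you failed to exploit, and exploiting it removes the broken step.
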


\begin{proof} 
The initial argument  of the proof is inspired  to that of Theorem 2.1 in \cite{luc-mig}. 
If we fix $k^0 \in {\rm int}\, C$, we can find  $\bar \xi \in C^{*}$ such that $\langle \bar \xi , k^0\rangle =1$. Consider the set
\begin{equation*}
{\mathcal Z}= \{z: X \to \RR\  {\rm such \  that}\  z(x)= \langle \bar \xi  , f(x) \rangle , f \in {\mathcal F} \}
\end{equation*}
Since $f$ is $C$-lsc,  $z$ is lsc. Endow ${\mathcal Z}$ with the topology of uniform convergence on bounded sets and let $S: {\mathcal F} \to {\mathcal Z}$ be the map $S(f)= z$, with $z$ defined as before. Then $S$ is a continuous map. Let 
\begin{equation}
{\mathcal A}_n=\{z\in {\mathcal Z} : \exists a > \inf_{x \in X} z , \ {\rm diam}\, L_z(a)< \frac1n\}
\end{equation}
where $L_z(a) =\{x \in X : z(x) \le a\}$. Observe that $L_z(a)$ are closed convex sets since $z$ is convex and lsc. Then, it is known (see e.g. \cite{lucchetti2}) that if $z_n \to z$ in the uniform convergence, then ${\rm diam}\, L_{z_n}(a) \to {\rm diam}\, L_z(a)$, which gives continuity of the ${\rm diam}$ function. Hence ${\mathcal A}_n$ is an open set for all $n$ and this implies $S^{-1}({\mathcal A}_n)$ is an open set for all $n$.  
We claim that the set ${\mathcal W}$ of those functions $h \in {\mathcal F}$ such that problem $(X, S(h))$ is T-wp is dense in ${\mathcal F}$. Since 
\begin{equation*}
{\mathcal W}= \bigcap_{n=1}^{+\infty}S^{-1}({\mathcal A}_n)
\end{equation*}
then it is a $G_{\delta}$ set i.e. the countable intersection of open   sets. Let $f \in {\mathcal F}$, $\sigma >0$ and take $j$ so large that setting $g(x)=f(x)+\frac1j\|x-\theta\|k^0$  it holds $d(f, g)<\frac{\sigma}{2}$ . Setting  $g_{\bar \xi}(x)= \langle \bar \xi , g(x)\rangle$   we have $\lim_{\|x\|\to +\infty}g_{\bar \xi}(x) = +\infty$ and $g_{\bar \xi}$ is lower bounded by Corollary \ref{convex-infinity}. The proof now follows along the lines of Theorem \ref{density-C-bounded}. We can find $M>0$ such that 
\begin{equation*}
\{x \in X: g(x) \le \inf_{x \in X} g(x) +1\} \subseteq B(\theta , M)
\end{equation*}
Let  $h: X \to Y$ be defined as
\begin{equation*}
h(x)= g(x)+ \varepsilon \|x- \bar x\|k^0
\end{equation*}
and let $s= \sum_{k=0}^{+\infty}\frac{1}{2^k} (k+M)\|k^0\|$. Apply Theorem \ref{evp} with $\varepsilon = \frac{\sigma}{2s}$ and arbitrary $r$ to find a point $\bar x=\bar x_{\sigma}\in X$ such that $\|\bar x - \theta\|\le M$, $\bar x$ is the unique minimizer of 
\begin{equation*}
S(h)(x)=\langle \bar \xi , g(x)\rangle + \varepsilon \|x-\bar x\|
\end{equation*} 
and problem $(X, S(h))$ is T-wp at $\bar x$  and hence $h \in {\mathcal W}$.
This implies  that problem $(X, h)$  is DH-wp at $\bar x$ by Lemma \ref{wp-scalar}. Now observe that 
\begin{equation*}
\|h(x)-g(x)\|_i \le  \varepsilon \|k^0\|(i+M)
\end{equation*}
It follows $d(h, g) \le \varepsilon s= \frac{\sigma}{2}$ and then $d(f, h) < \sigma$. 
 Hence ${\mathcal F}$ contains a dense $G_{\delta}$ set, which concludes the proof. 
\end{proof}



\end{document}